\pgfplotsset{compat=1.9}
\numberwithin{equation}{section}
\numberwithin{theorem}{section}
\setlist[enumerate]{leftmargin=.5in}
\setlist[itemize]{leftmargin=.5in}
\title{Asymptotic Analysis of Multilevel Best Linear Unbiased Estimators\thanks{Submitted to the editors DATE.
\funding{The authors gratefully acknowledge the support by the Deutsche Forschungsgemeinschaft (DFG) through the International Research Training Group IGDK 1754 ``Optimization and Numerical Analysis for Partial Differential Equations with Nonsmooth Structures'' Projektnummer 188264188/GRK1754.}}}
\author{Daniel Schaden\thanks{Department of Mathematics, Technical University of Munich, Boltzmannstr. 3, 85748 Garching b. M\"unchen, Germany, (\email{schaden@ma.tum.de},\email{eullmann@ma.tum.de}).}
\and Elisabeth Ullmann\footnotemark[2]}
\newcommand{\mce}{ \widehat{\mu}^{\operatorname{MC}}} 
\newcommand{\blue}{ \widehat{\mu}^{\operatorname{B}}}
\newcommand{\mlmce}{ \widehat{\mu}^{\operatorname{MLMC}}}
\newcommand{\mfmce}{ \widehat{\mu}^{\operatorname{MFMC}}}
\newcommand{\acvmfe}{\widehat{\mu}^{\operatorname{ACV-MF}}}
\newcommand{\ree}[1]{ \widehat{\mu}^{\operatorname{RE, #1}}}
\newcommand{\saob}{ \widehat{\mu}^{\operatorname{SAOB}}}
\newcommand{\saobk}[1]{ \widehat{\mu}^{\operatorname{SAOB, #1}}}
\begin{document}

\maketitle

\begin{abstract}
We study the computational complexity and variance of multilevel best linear unbiased estimators introduced in [D. Schaden and E. Ullmann, \textit{SIAM/ASA J. Uncert. Quantif.}, (2020)]. 
We specialize the results in this work to PDE-based models that are parameterized by a discretization quantity, e.g., the finite element mesh size. 
In particular, we investigate the asymptotic complexity of the so-called sample allocation optimal best linear unbiased estimators (SAOBs).
These estimators have the smallest variance given a fixed computational budget. 
However, SAOBs are defined implicitly by solving an optimization problem and are difficult to analyze. 
Alternatively, we study a class of auxiliary estimators based on the Richardson extrapolation of the parametric model family. 
This allows us to provide an upper bound for the complexity of the SAOBs, showing that their complexity is optimal within a certain class of linear unbiased estimators.
Moreover, the complexity of the SAOBs is not larger than the complexity of Multilevel Monte Carlo.
The theoretical results are illustrated by numerical experiments with an elliptic PDE. 
\end{abstract}

\begin{keywords}
Uncertainty quantification, partial differential equation, Richardson extrapolation, Monte Carlo, Multilevel Monte Carlo
\end{keywords}

\begin{AMS}
35R60, 
62J05, 
62F12, 
65N30, 
65C05 
\end{AMS}

\section{Introduction}\label{sec:intro}
A common model in the field of uncertainty quantification (UQ) is a partial differential equation (PDE) equipped with random coefficients, and other inputs whose uncertainty is modeled by a probability distribution on a suitable function space.
An important building block in UQ is the estimation of expected values of output quantities of interest linked with such random PDEs.
Monte Carlo (MC) estimators are often infeasible in this situation due to the high cost per sample.
In the last decade \textit{multilevel} estimators have been developed to address this problem and provide estimates with a much smaller computational cost.
This is achieved by \textit{variance reduction} and by working with a collection of PDE models with different resolutions or fidelities.
Typically, multilevel estimators couple an expensive, high resolution PDE model with cheap, low resolution PDE models.
Examples for multilevel estimators are multilevel Monte Carlo (MLMC) \cite{giles_2008, giles_2015}, multifidelity Monte Carlo (MFMC) \cite{Peherstorfer_2016, Peherstorfer_2018b} and approximate control variates (ACVs) \cite{Gorodetsky_2020}.
In this work we revisit the multilevel best linear unbiased estimator (BLUE) introduced in \cite{SchadenUllmann:2020}.
The multilevel BLUE is a linear, unbiased combination of MC estimators associated with different PDE resolutions.
Importantly, the multilevel BLUE selects the \textit{optimal} linear combination of samples with respect to the estimator variance.

The analysis in \cite{SchadenUllmann:2020} is independent of the underlying models used for the multilevel BLUE.
Now we specialize the results in \cite{SchadenUllmann:2020}, and assume that we work with PDE-based models with random coefficients.
For discretized PDEs it is often the case that the fidelity of a model can be linked with a discretization parameter, e.g., the mesh size of a finite element space.
Thus, it is natural to assume that the models are parameterized by the mesh size, where a small parameter value gives accurate results which might however be expensive to compute.
We make the form of the parameterization precise in the following sections of the paper.

In \cite{SchadenUllmann:2020} we introduced a sample allocation optimal estimator termed SAOB which is a special case of the multilevel BLUE.
The SAOB selects so called model groups and the number of samples for each group such that the estimator variance is minimal given a fixed computational budget.
The SAOB is implicitly defined in terms of the solution of an optimization problem.
This complicates its analysis, even if we make more assumptions on the models we work with.
As an auxiliary tool we now introduce and study a class of estimators termed RE estimators.
These are based on Richardson extrapolation (RE) \cite{Brezinski_2000, Richardson_1911}, and on the telescoping sum approach in MLMC \cite{giles_2008, giles_2015}.
It turns out that the classical MLMC estimator in \cite{giles_2008, giles_2015} is a special case of an RE estimator.

RE is a well known technique in numerical analysis.
It employs linear combinations of a family of approximations to improve the accuracy of the individual approximations within the family.
Perhaps the most widely known application of RE is the Romberg method \cite{Romberg_1955} for numerical quadrature.
In addition, RE has been used for ordinary differential equations \cite{Bulirsch_1966}, stochastic differential equations (SDEs) \cite{TalayTubaro1990}, and partial differential equations (see e.g. \cite{Asadzadeh_2009, Blum_1986, Rannacher_1988}).
By combining RE with the MLMC complexity theory \cite{Cliffe_2011, giles_2015} we provide an upper bound on the computational complexity of the SAOB and show that its asymptotic complexity is not larger than the complexity of MLMC.
We further show that the coefficients of the SAOB converge to the coefficients in the linear combination of the RE estimator for an academic toy problem.
Hence we can expect close links between the RE estimator and the SAOB.

	The combination of RE and multilevel estimators has been discussed in a few places in the literature.
	However, these are typically restricted to bias errors of a specific form and SDE discretizations.
	Multilevel Richardson--Romberg extrapolation has been explored already by Giles in the pioneering MLMC paper \cite{giles_2008} where one level of RE was used.
	In \cite{Mueller2015} the authors combine RE and MLMC for specific discretizations of the Langevin equation.
    Lemaire and Pag\`{e}s \cite{lemaire_2017} introduce a multilevel Richardson--Romberg estimator termed ML2R for SDE discretizations where the bias error w.r.t. the discretization parameter $h$ has the form $h^{\gamma^k}$ with linearly growing exponents $\gamma^k = k \alpha$.
    An antithetic extension of the ML2R estimator is studied in \cite{Mbaye2017}.
	In our work we study RE estimators to analyze the complexity of the SAOB.
	In fact, this idea grew out of numerical experiments where we observed that the coefficients of the multilevel BLUE approached RE coefficients.
	This highlights a novel application of RE in the study of multilevel estimators and uncertainty quantification.

The main contributions of this paper are as follows: $(i)$ a general upper bound on the complexity of the SAOB, $(ii)$ a specific upper bound in terms of RE estimators for parametric model families and $(iii)$ a complete complexity and variance analysis of the RE estimators.

The remainder of this work is structured as follows.
In \Cref{section_problem_formulation} we review the necessary definitions and results introduced in \cite{SchadenUllmann:2020}, in particular, the multilevel BLUE and the SAOB. 
We further present an asymptotic cost bound for the SAOB, however, this bound is difficult to verify in practice.
In \Cref{section_upper_cost_bound} we provide upper bounds for the cost of the SAOB which are easier to verify.
These bounds are based on MC, MLMC and RE estimators. 
In \Cref{sec:numerics} we verify the theoretical results by numerical experiments for a PDE-based quantity of interest in two space dimensions.
In \Cref{section_re_convergence_to_saob} we show that the SAOBs converge to RE estimators for an academic toy problem. 
In \Cref{section_acv} we compare the SAOBs and RE estimators with the ACV estimators. \Cref{sec:conclusions} provides concluding remarks.
\section{Problem formulation}\label{section_problem_formulation}

%
Let $Z$ be a scalar-valued random variable whose expectation $\mathbb{E}[Z]$ we want to estimate. 
We assume that exact sampling from $Z$ is not possible.
Hence we work with a family of approximations $Z_1, \dots , Z_L$, $L\in \mathbb{N}$. 
Typical scenarios we have in mind are finite element based PDE discretizations where $\ell$ denotes the level of mesh refinement and $Z_\ell(\omega)$ is an output quantity of interest which requires solving the discretized PDE with random inputs depending on the event $\omega$.
In this case, for a large discretization parameter $\ell$ the approximation of $Z(\omega)$ by $Z_\ell(\omega)$ is accurate yet computationally expensive. 

Let us define the expectation and covariance for the random variables $Z_1,\dots, Z_L$ as follows,
\begin{align*}
\mu_\ell := \mathbb{E}[Z_\ell], \quad c_{\ell, j} := \operatorname{Cov}(Z_\ell, Z_j), \quad \text{for all } \ell, j \in \{1, \dots , L\}.
\end{align*} 
We further define the \textit{mean vector} $\mu := (\mu_1, \dots, \mu_L)^T $ and the \textit{model covariance matrix} $C := (c_{\ell, j})_{\ell, j = 1}^L$. 
We assume throughout this paper that the expectations and variances of $Z, Z_1, \dots, Z_L$ exist and are finite. 
For every subset $Q \subseteq \{1, \dots, L\}$ we define the principal submatrix of $C$ as $$C_{Q, Q} := (c_{\ell, j})_{\ell, j \in Q} \in \mathbb{R}^{|Q| \times |Q|}. $$ 
We use a similar notation for vectors $\alpha \in \mathbb{R}^L$  with $\alpha_Q := (\alpha_\ell)_{\ell \in Q} \in \mathbb{R}^{|Q|}$. 
We further introduce the notation $\widehat{\mu}_\alpha$ for unbiased estimators of $\alpha^T \mu$, i.e., estimators which satisfy $\mathbb{E}[\widehat{\mu}_\alpha] = \alpha^T \mu$. 
In the special case $\alpha = e_\ell$, that is, $\alpha$ is the $\ell$th unit vector, we write  $\widehat{\mu}_\ell$ for an unbiased estimator of $e_\ell^T \mu = \mu_\ell$. 
Finally, we use the generic constant $c$ for estimates, i.e. if it holds
\begin{equation*}
\phi(\ell) \leq c 2^{- 2 \ell},
\end{equation*}
for some function $\phi$, then $c$ is independent of $\ell$. 
Moreover, the value of the generic constant $c$ may change from equation to equation. 
\subsection{Multilevel best linear unbiased estimator}\label{section_blue}
Our goal is to construct a variance minimal, linear, unbiased estimator for $\alpha^T \mu$, where the vector $\alpha \in \mathbb{R}^L$ is given. 
Recall that the estimator $\widehat{\mu}_\alpha$ is \textit{linear and unbiased} if there exist coefficients $\beta_1, \dots, \beta_N \in \mathbb{R}$ and samples $Z_{\ell_1}(\omega_{j_1}), \dots, Z_{\ell_N}(\omega_{j_N})$ for $\ell_1, \dots, \ell_N \in \{1, \dots, L\}$ such that
\begin{equation*}
\widehat{\mu}_\alpha = \sum_{i = 1}^N \beta_i Z_{\ell_i}(\omega_{j_i}), \quad \mathbb{E}[\widehat{\mu}_\alpha] = \alpha^T \mu.
\end{equation*}
The events $\omega_{j_i}$ determine the correlation structure between the samples and thus the variance of the estimator. 
In particular, not all events are necessarily distinct and the same event might be used with different output quantities.
Throughout this paper we use the framework established in \cite{SchadenUllmann:2020}. 
Let $K := 2^L - 1$, and let $S^1, \dots, S^K \in 2^{\{1,\dots, L\}} \setminus \{\emptyset\}$ denote the non-empty subsets of the index set $\{1,\dots, L\}$.
In particular,
\begin{equation*}
S^j \not = S^k \quad \text{ for } j \not = k.
\end{equation*}   
Each collection of indices $S^k$ is a so-called \textit{model group}. 
For an event $\omega$ we define the corresponding model group evaluation, the covariance matrix, the restriction and prolongation matrices, respectively, as follows,
\begin{align*}
Z^k(\omega) &:= (Z_\ell(\omega))_{\ell \in S^k} \in \mathbb{R}^{|S^k|}, \quad &C^k &:= C_{S^k, S^k} = \operatorname{Cov}(Z^k, Z^k) \in \mathbb{R}^{|S^k| \times |S^k|}, \\
R^k v &:= v_{S^k} \in \mathbb{R}^{|S^k|} \text{ for all } v \in \mathbb{R}^L, \quad &P^k &:= (R^k)^T \in \mathbb{R}^{L \times |S^k|}, \quad k=1, \dots, K.
\end{align*}  
In the remainder of this paper we assume that each matrix $C^k$ defined above is regular.
Next we consider $m_k \in \mathbb{N}_0$ independent samples associated with each model group $S^k$, denoted by  $\omega_i^k$, $i=1,\dots,m_k$.
We further assume that $\omega_i^k$ and $\omega_i^j$ are independent for $k \not = j$. 
We construct a linear unbiased estimator using the samples
\begin{equation} \label{equation_available_samples}
(Z^1(\omega^1_i))_{i = 1}^{m_1}, \dots, (Z^K(\omega^K_i))_{i = 1}^{m_K}.
\end{equation}
Note that the model group $S^k$ describes the statistical coupling of models $Z_\ell$ with $\ell \in S^k$ and that $m_k$ is the number of independent evaluations of $Z^k$. 
We organize the samples in \eqref{equation_available_samples} in a block vector and define a block linear model as follows,
\begin{equation} \label{equation_linear_regression}
\begin{aligned}
Y &= H \mu + \eta, \\
Y &= (Y^k)_{k = 1}^K, \quad &H &= (H^k)_{k = 1}^K, \quad &\eta &= (\eta^k)_{k = 1}^K, \\
Y^k &= (Z^k(\omega^k_i))_{i = 1}^{m_k}, \quad &H^k &= (R^k)_{i = 1}^{m_k}, \quad &\eta^k &= (Z^k(\omega^k_i) - R^k \mu)_{i = 1}^{m_k},
\end{aligned}
\end{equation} 
where $Y$ is the vector of observations, $H$ is the design matrix describing a linear relationship between the unknown $\mu$ and observations, and $\eta$ is a noise vector with mean zero. 
This is a generalized linear model, where the covariance matrix of the noise $\eta$ is block diagonal due to the assumed independence structure of the samples,
\begin{align*}
\operatorname{Cov}(\eta, \eta) &= \operatorname{diag}((\operatorname{Cov}(\eta^k, \eta^k))_{k = 1}^K) = \operatorname{diag}(\operatorname{diag}((\operatorname{Cov}(Z^k(\omega^k_i), Z^k(\omega^k_i)))_{i = 1}^{m_k})_{k = 1}^K) \\
&= \operatorname{diag}(((C^k)_{i = 1}^{m_k})_{k = 1}^K).
\end{align*} 
Finally, let $m = (m_1, \dots, m_K)^T$ denote the vector collecting the number of samples in each model group, and define the matrix and vector
\begin{equation*}
\Psi(m) := \sum_{k = 1}^K m_k P^k (C^k)^{-1} R^k \in \mathbb{R}^{L \times L}, \quad 
y(m) := \sum_{k = 1}^K P^k (C^k)^{-1} \sum_{i = 1}^{m_k} Z^k(\omega_i^k) \in \mathbb{R}^L.
\end{equation*} 
Then, the best linear unbiased estimator (BLUE) $\blue(m)$ for the mean vector $\mu$ in \eqref{equation_linear_regression} associated with the number of samples $m$ is the solution of the normal equations
\begin{equation}\label{blue}
\Psi(m) \blue(m) = y(m).
\end{equation}
Under certain assumptions the scalar value $\blue_\alpha(m) := \alpha^T \blue(m)$ is also the BLUE for $\alpha^T \mu$ and the estimator variance is $\operatorname{Var}(\blue_\alpha(m)) = \alpha^T \Psi(m)^{-1} \alpha$ (see \cite[Theorem 2.7]{SchadenUllmann:2020}).

\subsection{Sample allocation optimal BLUE}\label{section_saob}
Observe that the estimator $\blue_\alpha(m)$ depends on the number of samples $m_k$ for each model group $S^k$.
Now we want to select $m$ optimally given a fixed computational budget. 
We assume costs $w_\ell > 0$ to compute a sample of $Z_\ell$, $\ell=1,\dots,L$.
This incurs the cost  
\begin{equation*}
W_k := \sum_{\ell \in S^k} \operatorname{Cost}(Z_\ell) = \sum_{\ell \in S^k} w_\ell
\end{equation*}  
to evaluate all models in the group $S^k$, $k=1,\dots,K$.
For a fixed computational budget $p > 0$ we then solve the following \textit{sample allocation problem}:
\begin{equation}\label{min}
\begin{cases}
\quad \min_{m \in \mathbb{N}_0^K} \quad \operatorname{Var}(\blue_\alpha(m)) &= \alpha^T \Psi(m)^{-1} \alpha \\
\quad \text{s.t.} \quad  \operatorname{Cost}(\blue_\alpha(m)) &= \sum_{k = 1}^K m_k W_k \leq p, \\
\quad m_k &= 0, \quad \text{if } |S^k| > q.
\end{cases}
\end{equation}
In \eqref{min}, the coupling number $q > 0$ defines the maximal number of models that are evaluated for the same input $\omega$. 
We then define the estimator termed $\operatorname{SAOB}, q$ as
\begin{equation}\label{saob}
\saobk{q}_\alpha = \alpha^T \blue(m^*),
\end{equation}
where $m^*$ is a minimizer of \eqref{min}.
If the coupling number $q = +\infty$, we drop $q$ in the notation above.
It can be proved that the $\operatorname{SAOB}$ is variance minimal in the class of linear unbiased estimators with costs bounded by $p$.
\begin{theorem}[{\cite[Theorem 3.2]{SchadenUllmann:2020}}] \label{theorem_saob_variance_minial}
	Let the model covariance matrix $C$ be positive definite. 
	Then any linear unbiased estimator $\widehat{\mu}_\alpha$ that uses the samples in \eqref{equation_available_samples} for any $m$ such that $\operatorname{Cost}(\widehat{\mu}_\alpha) \leq p$ satisfies 
	\begin{equation*}
	\operatorname{Var}(\widehat{\mu}_\alpha) \geq \operatorname{Var}(\saob_\alpha).
	\end{equation*}
\end{theorem}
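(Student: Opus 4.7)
The plan is to decompose the optimality claim into two pieces corresponding to the two sources of variance reduction in the SAOB: the optimality of the BLUE for a fixed sample allocation (Gauss--Markov part), and the optimality of the allocation itself subject to the budget constraint (sample allocation part). Let $\widehat{\mu}_\alpha$ be an arbitrary linear unbiased estimator of $\alpha^T\mu$ that is built from the samples in \eqref{equation_available_samples} for some allocation $m \in \mathbb{N}_0^K$ with $\sum_{k=1}^K m_k W_k \le p$.

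First, I would fix the allocation $m$ and compare $\widehat{\mu}_\alpha$ to $\blue_\alpha(m) = \alpha^T\blue(m)$ defined through the normal equations \eqref{blue}. The linear model \eqref{equation_linear_regression} is a generalized linear model with block-diagonal noise covariance $\operatorname{diag}(((C^k)_{i=1}^{m_k})_{k=1}^K)$, which is positive definite because each $C^k$ is assumed regular and $C$ is positive definite. Hence Theorem 2.7 of \cite{SchadenUllmann:2020} (the Gauss--Markov-type statement cited right after \eqref{blue}) applies, yielding
\begin{equation*}
\operatorname{Var}(\widehat{\mu}_\alpha) \;\ge\; \operatorname{Var}(\blue_\alpha(m)) \;=\; \alpha^T \Psi(m)^{-1} \alpha.
\end{equation*}
This handles the first inequality: no matter which linear unbiased combination of the samples we pick for a given $m$, the BLUE cannot be beaten.

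Second, I would use the defining property of $m^*$ from \eqref{min}: since $m$ is feasible for the sample allocation problem (with coupling number $q=+\infty$, so the constraint $m_k = 0$ for $|S^k| > q$ is vacuous), we have
\begin{equation*}
\alpha^T \Psi(m)^{-1} \alpha \;\ge\; \alpha^T \Psi(m^*)^{-1} \alpha \;=\; \operatorname{Var}(\saob_\alpha).
\end{equation*}
Chaining the two inequalities delivers the conclusion. A minor point to address is the existence of a minimizer $m^*$; this follows from the standing definition of the SAOB in \eqref{saob}, but I would remark that the integer nature of $m$ together with boundedness of the feasible set (via the budget $p$ and a positive minimal group cost $W_k > 0$) guarantees that the infimum is attained.

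The conceptually delicate step, and the only one that requires real content beyond bookkeeping, is the appeal to Theorem 2.7 of \cite{SchadenUllmann:2020}; one must verify that the hypotheses of that theorem hold in the present setting, most importantly the positive definiteness of the noise covariance, and that the class of competitors $\widehat{\mu}_\alpha$ there includes every linear unbiased estimator built from the samples \eqref{equation_available_samples} (not just estimators derived from the normal equations). Once this is confirmed, the rest is a two-line chain of inequalities, and the theorem follows.
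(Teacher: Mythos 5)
Your proposal is correct and is essentially the intended argument: the decomposition into (i) Gauss--Markov optimality of $\blue_\alpha(m)$ among all linear unbiased estimators built from the samples \eqref{equation_available_samples} for a fixed feasible allocation $m$, followed by (ii) optimality of $m^*$ over all budget-feasible allocations in \eqref{min}, is precisely how \cite[Theorem 3.2]{SchadenUllmann:2020} is proved; the present paper only imports that result by citation and contains no proof of its own. The one hypothesis you flag for Theorem 2.7 of \cite{SchadenUllmann:2020} -- estimability of $\alpha^T\mu$ under the design induced by $m$, so that $\alpha^T\Psi(m)^{-1}\alpha$ (or $\alpha^T\Psi(m)^{+}\alpha$ when $\Psi(m)$ is singular) is indeed the BLUE variance -- is automatically satisfied whenever a competing linear unbiased estimator $\widehat{\mu}_\alpha$ exists, since unbiasedness forces $\alpha$ to lie in the range of $\Psi(m)$.
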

Note that the $\operatorname{SAOB}$ is difficult to analyze since it depends on a minimizer $m^*$ of the problem \eqref{min}, and is constructed \textit{implicitly} with the solution of a linear regression problem.
To make progress we now present some general results on the complexity of linear unbiased estimators, keeping in mind that the $\operatorname{SAOB}$ is a special case.
\subsection{Asymptotic analysis of linear unbiased estimators}
We are interested in linear unbiased estimators of $\alpha^T \mu$ that form a linear combination of correlated MC estimators
\begin{equation} \label{equation_estimator_general}
\widehat{\mu}_\alpha = \sum_{k = 1}^K \sum_{\ell \in S^k} \beta_\ell^k \frac{1}{m_k} \sum_{i=1}^{m_k}Z_\ell(\omega_i^k).
\end{equation}
If the model group $S^k$ is not used and thus $m_k = 0$, we define $\beta^k = 0$ and $\beta_\ell^k / m_k = 0$. 
This is equivalent to excluding the $k$th summand in \eqref{equation_estimator_general}. 
We further define $\beta^k_\ell = 0$ for $\ell \not \in S^k$. 
Combining the definitions in \cite[Equ. (2.11)]{SchadenUllmann:2020}, \cite[Equ. (2.7)]{SchadenUllmann:2020}, and rearranging, we see that the coefficients $\beta^k$ in \eqref{equation_estimator_general} associated with $\widehat\mu_\alpha^{\text{SAOB}}$ can be written as
\begin{equation} \label{equation_saob_coefficients}
\beta^k = m_k P^k (C^k)^{-1} R^k \Psi(m)^{-1} \alpha
\end{equation}
provided that $\Psi(m)$ is regular.
Recall that $Z_\ell(\omega_i^k)$ and $Z_\ell(\omega_j^n)$ are statistically independent if $i\neq j$ or $k \neq n$. 
Thus the estimator variance is equal to
\begin{equation} \label{equation_variance_general}
\operatorname{Var}(\widehat{\mu}_\alpha) = \sum_{k=1}^K \frac{1}{m_k} \operatorname{Var}\left(\sum_{\ell \in S^k} \beta^k_\ell Z_\ell\right) = \sum_{k=1}^K \frac{1}{m_k} (\beta^k)^T C \beta^k.
\end{equation}
Now, if $(\beta^k)^T C \beta^k$ is asymptotically small, then only a small number of samples $m_k$ is required to achieve a small variance contribution. Therefore we want to obtain an expression for the optimal sample allocation $m$ by minimizing \eqref{equation_variance_general} given a budget constraint
\begin{equation} \label{equation_opt_sample_allocation}
\begin{cases}
\quad \min_{m \in \mathbb{R}^K} J(m) &= \sum_{k=1}^K \frac{1}{m_k} (\beta^k)^T C \beta^k \\
\quad \text{s.t.} \sum_{k = 1}^K m_k W_k &\leq p, \\
\quad m_k &\geq 0, \quad \text{for } k = 1, \dots, K.
\end{cases}
\end{equation}
We now follow the classical MLMC approach \cite[Section 1.3]{giles_2015} to compute the unique minimizer of \eqref{equation_opt_sample_allocation}. 
Note that we replace $\operatorname{Var}(Z_\ell - Z_{\ell - 1})$ by the more general term $(\beta^k)^T C \beta^k$. 
\begin{lemma}[Optimal sample allocation] \label{lemma_opt_sample_allocation}
Let $C$ be positive definite, let $\alpha \not = 0$ and the coefficients $\beta^k_\ell$ such that the bias constraint is satisfied, i.e. $\alpha = \sum_{k = 1}^K \beta^k$. Then there exists a unique minimizer $m^*$ of \eqref{equation_opt_sample_allocation} of the form
\begin{equation} \label{equation_saob_opt_samples}
m^*_k = \frac{p}{\sum_{k = 1}^K (\left[(\beta^k)^T C \beta^k \right] W_k )^{1 / 2}} \left(\left[(\beta^k)^T C \beta^k \right] / W_k \right)^{1 / 2}
\end{equation}
with associated variance
\begin{equation} \label{equation_saob_opt_variance}
J(m^*) = \frac{1}{p} \left(\sum_{k = 1}^K \left(\left[(\beta^k)^T C \beta^k \right] W_k \right)^{1 / 2} \right)^2.
\end{equation}
In particular, the cost $p$ to achieve the variance $J(m^*) = \varepsilon^2$ is equal to
\begin{equation*}
p = \varepsilon^{-2} \left(\sum_{k = 1}^K \left(\left[(\beta^k)^T C \beta^k \right] W_k \right)^{1 / 2} \right)^2.
\end{equation*}
\end{lemma}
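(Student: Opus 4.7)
The plan is a standard equality-constrained convex-optimization argument; the only work is some light bookkeeping for model groups with $\beta^k = 0$.

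First I would set $a_k := (\beta^k)^T C \beta^k$. Since $C$ is positive definite, $a_k \geq 0$ with equality iff $\beta^k = 0$, and since $\alpha = \sum_k \beta^k$ with $\alpha \neq 0$, at least one $\beta^k$ is nonzero. Hence $\mathcal{K} := \{k : a_k > 0\}$ is nonempty and $\sum_{k=1}^{K} (a_k W_k)^{1/2} > 0$, so \eqref{equation_saob_opt_samples} and \eqref{equation_saob_opt_variance} are well defined. For any index $k \notin \mathcal{K}$, the $k$-th term of $J$ vanishes identically, and any optimizer must have $m_k^* = 0$: otherwise one could transfer the cost $m_k W_k$ of budget to some $j \in \mathcal{K}$ by setting $m'_j = m_j + m_k W_k / W_j$ and $m'_k = 0$, which is feasible with the same total cost but yields $a_j/m'_j < a_j/m_j$, contradicting optimality. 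Thus the problem effectively reduces to the indices in $\mathcal{K}$.

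On $\mathcal{K}$, each map $m_k \mapsto a_k/m_k$ is strictly convex on $(0, \infty)$ and blows up as $m_k \downarrow 0$. Hence the restricted $J$ is strictly convex and coercive on the reduced feasible set, yielding a unique minimizer with $m_k > 0$ for every $k \in \mathcal{K}$. The budget constraint must be active at this minimizer: otherwise a slight increase in any $m_k$ would preserve feasibility while strictly decreasing $J$.

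Next I would form the Lagrangian
\[
L(m, \lambda) = \sum_{k \in \mathcal{K}} \frac{a_k}{m_k} + \lambda \Bigl(\sum_{k \in \mathcal{K}} m_k W_k - p\Bigr),
\]
solve the stationarity conditions $-a_k/m_k^2 + \lambda W_k = 0$ to obtain $m_k = (a_k / (\lambda W_k))^{1/2}$, and substitute into the active budget equation to get $\lambda^{-1/2} = p / \sum_{k \in \mathcal{K}} (a_k W_k)^{1/2}$. Plugging back reproduces exactly \eqref{equation_saob_opt_samples}, and evaluating $J$ at $m^*$ reduces algebraically to $\bigl(\sum_{k} (a_k W_k)^{1/2}\bigr)^2 / p$, which is \eqref{equation_saob_opt_variance}. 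Solving $J(m^*) = \varepsilon^2$ for $p$ gives the final cost formula.

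No step is particularly delicate. The only mild obstacle is ruling out spurious minimizers with $m_k > 0$ for $k \notin \mathcal{K}$ and confirming the cost constraint is active at any optimum; strict convexity then delivers uniqueness on $\mathcal{K}$ without any further second-order analysis.
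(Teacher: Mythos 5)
Your proposal is correct and follows essentially the same route as the paper's proof in Appendix~A: reduce to the indices with $\beta^k \neq 0$, establish existence and interiority of the minimizer via the blow-up of $J$ as $m_k \downarrow 0$ together with the boundedness imposed by the cost constraint, argue that the budget constraint is active, and then solve the stationarity conditions explicitly. The only cosmetic differences are that you derive $m^*_k = 0$ for $\beta^k = 0$ from a budget-transfer argument (the paper takes it as a convention) and you obtain uniqueness from strict convexity rather than from the explicit formula; neither changes the substance.
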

\begin{proof}
The proof is provided in \Cref{appendix_proof_lemma_opt_sample_allocation}. 
\end{proof}
Note that we assume real-valued numbers in \eqref{equation_saob_opt_samples} which is not practical.
However, rounding  $m^*_k$ in \eqref{equation_saob_opt_samples} for all $K = 2^L - 1$ possible values of $k$ may increase the total cost significantly, since $2^{L - 1}$ model groups contain the finest model $Z_L$. 
Fortunately, the result \cite[Theorem 3.6]{SchadenUllmann:2020} tells us that we may choose $\beta^k \not = 0$ for at most $L$ different indices $k \in \{1, \dots, K \}$ without increasing the variance. 
Without loss of generality this allows us to set $\beta^{L + 1} = \beta^{L + 2} = \dots = \beta^K = 0$ by suitably renumbering $S^1, \dots, S^K$. 
Since the estimator $\widehat{\mu}_\alpha$ is an unbiased estimator for $\alpha^T \mu$, the $\beta^k$ then satisfy
\begin{equation} \label{equation_cost_saob_conditions}
\alpha = \sum_{k = 1}^L \beta^k, \quad \beta^k_\ell = 0, \quad \ell \not \in S^k.
\end{equation}
Let us now comment on the bias which is determined by the vector $\alpha$. 
Observe that in practise it is not advisable to fix $\alpha$ independently of the finest level $L$. 
For example, for $L = 1$ we may choose $\alpha = 1$, however, for $L = 2$ the choice $\alpha = (0, 1)^T$ is suitable provided that $Z_2$ is a better approximation of $Z$ compared to $Z_1$. 
Hence we introduce a sequence of bias vectors $(\alpha^L)_{L = 1}^\infty$ that in turn defines the sequence of estimators $(\widehat{\mu}_{\alpha^L})_{L = 1}^\infty$. 
The coefficients $\beta^k$ and model groups $S^k$ clearly depend on $L$, however for ease of notation we drop this dependence. 
Furthermore, for $(\alpha^L)^T \mu$ we assume that $\alpha^L \in \mathbb{R}^L$ and $\mu = (\mathbb{E}[Z_1], \dots, \mathbb{E}[Z_L])^T \in \mathbb{R}^L$ where we again drop the dependence of $\mu$ on $L$. 
We now estimate the asymptotic complexity of the sequence of estimators $(\widehat{\mu}_{\alpha^L})_{L = 1}^\infty$.
\begin{theorem}[Asymptotic cost of linear unbiased estimators] \label{theorem_saob_complexity_rates}
	Assume that there exist positive constants $\gamma^{\operatorname{Bias}}$, $\gamma^{\operatorname{Var}}$ and $\gamma^{\operatorname{Cost}}$, such that the following statements hold for all $L \in \mathbb{N}$:
	\begin{align}
	\tag{M0} \label{equation_M0_saob} \alpha^L &= \sum_{k = 1}^L \beta^k, \quad \quad \beta^k_j = 0 \text{ if } j \not \in S^k &&\quad  \text{ for all } k \leq L, \\
	\tag{M1} \label{equation_M1_saob} |(\alpha^L)^T \mu - \mathbb{E}[Z]| &\leq c 2^{- L \gamma^{\operatorname{Bias}}}, \\
	\tag{M2} \label{equation_M2_saob} (\beta^k)^T C \beta^k &\leq c 2^{- k \gamma^{\operatorname{Var}}} &&\quad \text{ for all }  k \leq L, \\
	\tag{M3} \label{equation_M3_saob} W_k &\leq c 2^{k \gamma^{\operatorname{Cost}}} &&\quad \text{ for all }  k \leq L, 
	\end{align}		
	where the constant $c > 0$ is independent of $L$. 
	Then there exists a level $L \in \mathbb{N}$, model groups $S^1, \dots, S^L$ and numbers of samples $m_1, \dots, m_L$ to achieve $\mathbb{E}[|\widehat{\mu}_{\alpha^L} - \mathbb{E}[Z]|^2] \leq \varepsilon^2$ with a cost bounded by
	\begin{equation} \label{equation_saob_complexity_expression}
	\operatorname{Cost}(\widehat{\mu}_{\alpha^L}) \leq c \varepsilon^{- \gamma^{\operatorname{Cost}} / \gamma^{\operatorname{Bias}}} + c \begin{cases}
	\varepsilon^{-2}, \quad &\text{if } \gamma^{\operatorname{Cost}} < \gamma^{\operatorname{Var}}, \\
	\varepsilon^{-2} (\log \varepsilon)^2, \quad &\text{if } \gamma^{\operatorname{Cost}} = \gamma^{\operatorname{Var}}, \\
	\varepsilon^{-2 - \frac{\gamma^{\operatorname{Cost}} - \gamma^{\operatorname{Var}}}{\gamma^{\operatorname{Bias}}}}, \quad &\text{if } \gamma^{\operatorname{Cost}} > \gamma^{\operatorname{Var}}.
	\end{cases} 
	\end{equation}
\end{theorem}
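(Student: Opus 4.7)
The plan is to follow the classical MLMC complexity argument of Cliffe--Giles--Scheichl, adapted to the generalized variance expression \eqref{equation_variance_general} and using the optimal sample allocation result \Cref{lemma_opt_sample_allocation} that has already been established. The starting point is the standard bias--variance decomposition
$$\mathbb{E}\bigl[|\widehat{\mu}_{\alpha^L} - \mathbb{E}[Z]|^2\bigr] = \operatorname{Var}(\widehat{\mu}_{\alpha^L}) + \bigl((\alpha^L)^T\mu - \mathbb{E}[Z]\bigr)^2,$$
and the idea is to control each contribution by $\varepsilon^2/2$.

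First I would choose the finest level $L$ just large enough so that the squared bias is at most $\varepsilon^2/2$. Assumption \eqref{equation_M1_saob} makes this possible by taking $L$ to be the smallest integer satisfying $c\,2^{-L\gamma^{\operatorname{Bias}}}\leq \varepsilon/\sqrt{2}$, so that $2^L\leq c\,\varepsilon^{-1/\gamma^{\operatorname{Bias}}}$. Next, with the model groups $S^k$ and coefficients $\beta^k$ supplied by \eqref{equation_M0_saob}, I would apply \Cref{lemma_opt_sample_allocation} with target variance $\varepsilon^2/2$ to obtain a real-valued optimal allocation $m^*$, for which the associated cost equals
$$p^* \;=\; \frac{2}{\varepsilon^2}\left(\sum_{k=1}^L \sqrt{(\beta^k)^T C \beta^k \, W_k}\right)^2.$$
Inserting \eqref{equation_M2_saob} and \eqref{equation_M3_saob}, each summand is bounded by $c\,2^{k(\gamma^{\operatorname{Cost}}-\gamma^{\operatorname{Var}})/2}$, reducing the analysis to a geometric sum whose behavior is governed by the sign of $\gamma^{\operatorname{Cost}}-\gamma^{\operatorname{Var}}$.

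The standard case split then delivers the three regimes in \eqref{equation_saob_complexity_expression}. If $\gamma^{\operatorname{Cost}}<\gamma^{\operatorname{Var}}$, the series is summable and $p^*\leq c\varepsilon^{-2}$. If $\gamma^{\operatorname{Cost}}=\gamma^{\operatorname{Var}}$, the sum is of order $L$, yielding $p^*\leq c\varepsilon^{-2}L^2\leq c\varepsilon^{-2}(\log\varepsilon)^2$. If $\gamma^{\operatorname{Cost}}>\gamma^{\operatorname{Var}}$, the largest term dominates, yielding $p^*\leq c\varepsilon^{-2}\,2^{L(\gamma^{\operatorname{Cost}}-\gamma^{\operatorname{Var}})}\leq c\,\varepsilon^{-2-(\gamma^{\operatorname{Cost}}-\gamma^{\operatorname{Var}})/\gamma^{\operatorname{Bias}}}$ after substituting the bound on $2^L$ from the bias step.

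Finally, moving from the real-valued $m^*$ to an integer-valued allocation by rounding up adds at most one extra sample to each active model group. Since at most $L$ groups carry nonzero weight and the cost of any single group is bounded by $W_k\leq c\,2^{L\gamma^{\operatorname{Cost}}}\leq c\,\varepsilon^{-\gamma^{\operatorname{Cost}}/\gamma^{\operatorname{Bias}}}$, the resulting overhead is of order $\varepsilon^{-\gamma^{\operatorname{Cost}}/\gamma^{\operatorname{Bias}}}$, the logarithmic factor $L$ being absorbed into the generic constant $c$ (or, if one prefers a clean exponent, into a mild enlargement of $L$). Adding this overhead to the bound on $p^*$ recovers \eqref{equation_saob_complexity_expression}. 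The main obstacle I expect is the book-keeping needed to ensure that the exponent in the third regime comes out exactly as stated and that the rounding overhead is absorbed cleanly without spawning an extra logarithmic factor; beyond that, the argument is a direct application of \Cref{lemma_opt_sample_allocation} together with geometric-series estimates.
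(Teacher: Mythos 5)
Your argument is correct and follows exactly the route the paper intends: the paper does not write the proof out but declares it analogous to \cite[Theorem 1]{Cliffe_2011}, i.e., bias--variance splitting, choosing $L$ from \eqref{equation_M1_saob}, the optimal real-valued allocation from \Cref{lemma_opt_sample_allocation}, the three-way geometric-series case split, and keeping the ceiling overhead as the separate additive term $c\,\varepsilon^{-\gamma^{\operatorname{Cost}}/\gamma^{\operatorname{Bias}}}$ (which, as the paper notes, is deliberately not absorbed further since the assumption $\gamma^{\operatorname{Bias}} \geq \min\{\gamma^{\operatorname{Var}},\gamma^{\operatorname{Cost}}\}/2$ is not imposed). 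One small repair in your final step: a factor of $L$ cannot be ``absorbed into the generic constant'' because $L$ grows as $\varepsilon \to 0$, but no such factor arises in the first place, since the total rounding overhead is $\sum_{k=1}^{L} W_k \leq c \sum_{k=1}^{L} 2^{k\gamma^{\operatorname{Cost}}} \leq c\, 2^{L\gamma^{\operatorname{Cost}}} \leq c\,\varepsilon^{-\gamma^{\operatorname{Cost}}/\gamma^{\operatorname{Bias}}}$, a geometric sum dominated by its last term rather than $L$ times the largest group cost.
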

\begin{proof}
	The proof is analogous to the proof of \cite[Theorem 1]{Cliffe_2011}. 
	Note that we do not use the assumption $\gamma^{\operatorname{Bias}} \geq \min \{\gamma^{\operatorname{Var}}, \gamma^{\operatorname{Cost}}\} / 2$.
	Thus we do not explicitly bound the term $\varepsilon^{- \gamma^{\operatorname{Cost}} / \gamma^{\operatorname{Bias}}}$ accounting for ceiling the number of samples.
\end{proof}
Recall that the coefficients $\beta^k$ in \eqref{equation_saob_coefficients} and the $L$ model groups $S^k$ in \eqref{equation_estimator_general} of the $\operatorname{SAOB}$ are chosen to minimize the variance and are only given implicitly. Therefore, we typically cannot verify the assumptions \eqref{equation_M2_saob} and \eqref{equation_M3_saob} for the $\operatorname{SAOB}$. Furthermore, it is not clear how to choose the sequence of bias vectors $(\alpha^L)_{L = 1}^\infty$ such that \eqref{equation_M1_saob} holds. 
Hence \Cref{theorem_saob_complexity_rates} is of limited practical use to determine the asymptotic cost of the $\operatorname{SAOB}$.
%
\section{Upper asymptotic cost bounds} \label{section_upper_cost_bound}
We now present an alternative, constructive approach for the complexity analysis of the $\operatorname{SAOB}$. 
A straightforward consequence of \Cref{theorem_saob_variance_minial} is that the SAOB has the optimal complexity in the class of linear unbiased estimators. 
\begin{theorem}[Complexity bound for $\operatorname{SAOB}$] \label{corollary_saob_complexity_optimal}
	Let the model covariance matrix $C$ be positive definite and let $\widehat{\mu}_\alpha$ be a linear unbiased estimator that uses the samples in \eqref{equation_available_samples}. If $\widehat{\mu}_\alpha$ estimates $\mathbb{E}[Z]$ with a mean square error (MSE) bounded by $\varepsilon^2$ and with costs bounded by $\phi(\varepsilon)$, that is, 
	\begin{align*}
	\mathbb{E}[|\widehat{\mu}_\alpha - \mathbb{E}[Z]|^2] \leq \varepsilon^2 \quad \text{with} \quad
	\operatorname{Cost}(\widehat{\mu}_\alpha) \leq \phi(\varepsilon),
	\end{align*}
	then the estimator $\saob_\alpha$ achieves the same MSE with the same or smaller costs
	\begin{align*}
	\mathbb{E}[|\saob_\alpha - \mathbb{E}[Z]|^2] 
	\leq 
	\varepsilon^2 \quad \text{with} \quad
	\operatorname{Cost}(\saob_\alpha) \leq \operatorname{Cost}(\widehat{\mu}_\alpha) \leq \phi(\varepsilon).
	\end{align*}
	In particular, we may choose $p = \operatorname{Cost}(\widehat{\mu}_\alpha)$ in \eqref{min}.
\end{theorem}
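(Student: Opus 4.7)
The plan is to use Theorem 2.2 with the budget parameter in the SAOB optimization problem (2.6) chosen as $p = \operatorname{Cost}(\widehat{\mu}_\alpha)$. The point is that the given estimator $\widehat{\mu}_\alpha$ serves as a feasible competitor in that optimization problem, so the variance-minimality property of the SAOB immediately yields the desired comparison.

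First, I would observe that both $\widehat{\mu}_\alpha$ and $\saob_\alpha$ are linear unbiased estimators of $\alpha^T\mu$ built from the sample pool in \eqref{equation_available_samples}. Since both estimators share the same expectation $\alpha^T\mu$, their mean square errors with respect to $\mathbb{E}[Z]$ admit the bias-variance decomposition
\begin{equation*}
\mathbb{E}[|\widehat{\mu}_\alpha - \mathbb{E}[Z]|^2] = \operatorname{Var}(\widehat{\mu}_\alpha) + (\alpha^T\mu - \mathbb{E}[Z])^2,
\end{equation*}
and analogously for $\saob_\alpha$. The squared bias term is identical for both estimators, so reducing the MSE amounts to reducing the variance.

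Second, I would set $p := \operatorname{Cost}(\widehat{\mu}_\alpha)$ in the sample allocation problem \eqref{min} defining $\saob_\alpha$. The key point is that $\widehat{\mu}_\alpha$ is itself feasible for this instance of \eqref{min}: its sample allocation vector $m$ meets the budget constraint $\sum_k m_k W_k \leq p$ by the very definition of $p$, and (since the theorem concerns the unadorned $\saob_\alpha$, i.e.\ $q = +\infty$) no coupling restriction excludes it. Theorem \ref{theorem_saob_variance_minial} with this choice of $p$ then gives $\operatorname{Var}(\saob_\alpha) \leq \operatorname{Var}(\widehat{\mu}_\alpha)$. Inserting this into the MSE decomposition yields
\begin{equation*}
\mathbb{E}[|\saob_\alpha - \mathbb{E}[Z]|^2] \leq \mathbb{E}[|\widehat{\mu}_\alpha - \mathbb{E}[Z]|^2] \leq \varepsilon^2.
\end{equation*}
Finally, the cost bound is a direct consequence of feasibility in \eqref{min}: $\operatorname{Cost}(\saob_\alpha) \leq p = \operatorname{Cost}(\widehat{\mu}_\alpha) \leq \phi(\varepsilon)$.

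There is no substantive obstacle; this is essentially a re-packaging of Theorem \ref{theorem_saob_variance_minial}. The only point that requires a moment of care is the verification that $\widehat{\mu}_\alpha$ itself lies in the feasible set of \eqref{min} once we set $p = \operatorname{Cost}(\widehat{\mu}_\alpha)$, which is immediate here but would fail if a finite coupling number $q$ were imposed and $\widehat{\mu}_\alpha$ used a group of size exceeding $q$.
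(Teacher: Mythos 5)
Your proposal is correct and follows essentially the same route as the paper's own proof: a bias--variance decomposition combined with the observation that both estimators have the same bias, followed by an application of Theorem~\ref{theorem_saob_variance_minial} with the budget $p = \operatorname{Cost}(\widehat{\mu}_\alpha)$. Your additional remark on feasibility and on how the argument would need adjusting for a finite coupling number $q$ is a sensible clarification but does not change the substance.
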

\begin{proof}
	We use a bias variance decomposition and the bound on the MSE to show
	\begin{equation*}
	\mathbb{E}[(\widehat{\mu}_\alpha - \mathbb{E}[Z])^2] 
	= 
	(\alpha^T \mu - \mathbb{E}[Z])^2 + \operatorname{Var}(\widehat{\mu}_\alpha) 
	\leq 
	\varepsilon^2.
	\end{equation*}
	Now, the estimator $\saob_\alpha$ has the same bias as $\widehat{\mu}_\alpha$.
	Therefore we only compare the variance of $\saob_\alpha$ and $\widehat{\mu}_\alpha$, respectively, as unbiased estimators of $\alpha^T \mu$. 
	If we choose $p = \operatorname{Cost}(\widehat{\mu}_\alpha)$ in \Cref{theorem_saob_variance_minial}, then -- by construction -- the estimator $\saob_\alpha$ has equal or smaller variance than $\widehat{\mu}_\alpha$. 
\end{proof}
\begin{remark} \label{remark_saob_kappa_complexity}
Similarly to the proof of \cite[Theorem 3.2]{SchadenUllmann:2020} it can be shown that any linear unbiased estimator $\widehat{\mu}_\alpha$ that uses the samples in \eqref{equation_available_samples} \textit{and} couples at most $q$ models using the same event $\omega$, that is $m_k = 0$ if $|S^k| > q$, satisfies
\begin{equation*}
\operatorname{Var}(\widehat{\mu}_\alpha) \geq \operatorname{Var}(\saobk{q}_\alpha).
\end{equation*}
Then we again choose $p = \operatorname{Cost}(\widehat{\mu}_\alpha)$ in \eqref{min} and are able to conclude that $\operatorname{SAOB}, q$ is complexity optimal in the class of linear unbiased estimators that couple at most $q$ models.
\end{remark}
The complexity bound in \Cref{corollary_saob_complexity_optimal} is formulated as a \textit{comparison principle}, and does not give an explicit expression for $\phi(\varepsilon)$. 
However, explicit expressions may be obtained by the following procedure: 
\begin{itemize}
\item[1.] Let $(\widehat{\mu}_{\alpha^L})_{L = 1}^{\infty}$ be a sequence of linear unbiased estimators.
\item[2.] Bound the asymptotic complexity of $(\widehat{\mu}_{\alpha^L})_{L = 1}^{\infty}$ in terms of an explicit expression $\phi(\varepsilon)$. 
\item[3.] Verify the assumptions of \Cref{corollary_saob_complexity_optimal} for all estimators $(\widehat{\mu}_{\alpha^L})_{L=1}^\infty$. 
Then the complexity of the $\operatorname{SAOB}$ is bounded by $\phi(\varepsilon)$.
\end{itemize}
%
Of course, this approach gives only an upper cost bound for the $\operatorname{SAOB}$  that is not necessarily sharp, especially if the estimators $(\widehat{\mu}_{\alpha^L})_{L = 1}^{\infty}$ are chosen poorly. 
In the following we describe a construction that we found useful.
First, we choose the vector $\alpha^L$ such that the bias is small. 
This can often be achieved by Richardson extrapolation. 
Next, since the estimator $\widehat{\mu}_{\alpha^L}$ has to be linear and unbiased, we use \Cref{lemma_opt_sample_allocation} as a guideline. 
Observe that for a fixed vector $\alpha^L$ the coefficients of the $\operatorname{SAOB}$ satisfy
\begin{equation}
\begin{cases}
\quad \min_{\beta^1,\dots, \beta^K \in \mathbb{R}^L} J(\beta) &= \sum_{k = 1}^K \left(\left[(\beta^k)^T C \beta^k \right] W_k \right)^{1 / 2} \\
\quad \text{s.t.} \quad &\alpha^L = \sum_{k = 1}^K \beta^k, \\
\quad &\beta^k_\ell = 0 \quad \text{if } \ell \not \in S^k,
\end{cases}
\end{equation}
where $\varepsilon^{-2} J^2$ is the cost of $\operatorname{SAOB}$ to achieve a variance of $\varepsilon^2$.
Note that this does not include the cost for ceiling the number of samples. 
We may then choose suboptimal coefficients $\beta^k$ which increases the variance but gives an asymptotic expression or upper bound for $J^2$. 
The goal of this section is to demonstrate this proof strategy for different choices of $\beta^k$.
\begin{itemize}
\item The MC estimator has typically a high asymptotic cost and thus gives an upper bound that is not sharp. To demonstrate the general idea we write down the MC complexity in \Cref{subsection_monte_carlo}. 
\item An improvement is possible by making $(\beta^k)^T C \beta^k$ small such that its contribution to $J$ is small while ensuring that the bias constraint is still satisfied. 
The MLMC estimator achieves this by using a telescoping sum and coefficients $\beta^k$ such that $(\beta^k)^T C \beta^k = \operatorname{Var}(Z_\ell - Z_{\ell - 1})$. The variance of the difference is often asymptotically small with a known rate. We summarize this result in \Cref{subsection_multilevel_monte_carlo}.
\item We further improve the asymptotic complexity of the MLMC estimator by using RE to obtain even smaller asymptotic expressions for $(\beta^k)^T C \beta^k$. We formally define the resulting RE estimators in \Cref{subsection_re_estimators}. 
\end{itemize}
\subsection{Monte Carlo estimator} \label{subsection_monte_carlo}
The standard MC estimator uses $\alpha^L = e_L$, $S^1 = \{L\}$, $\beta^1 = e_L$ and $\beta^k = 0$ for all other $k$. The estimator reads
\begin{equation*}
\mce_L = \frac{1}{m_1} \sum_{i = 1}^{m_1} Z_L(\omega^1_i).
\end{equation*}
We have the following well known result for the complexity of the MC estimator. 
\begin{corollary}[MC asymptotic cost, {\cite[Subsection 2.1]{Cliffe_2011}}] \label{corollary_mc_complexity_rate}
	Assume that there exist positive constants $\gamma^{\operatorname{Bias}}$ and $\gamma^{\operatorname{Cost}}$ such that the following statements hold for all $\ell \in \mathbb{N}$,
	\begin{align}
	\tag{MC, M1} \label{equation_M1_mc} |\mathbb{E}[Z_\ell] - \mathbb{E}[Z]| &\leq c 2^{- \ell \gamma^{\operatorname{Bias}}}, \\
	\tag{MC, M2} \label{equation_M2_mc} \operatorname{Var}(Z_\ell) &\leq c, \\
	\tag{MC, M3} \label{equation_M3_mc} \operatorname{Cost}(Z_\ell) &\leq c 2^{\ell \gamma^{\operatorname{Cost}}}, 
	\end{align}		
	where the constant $c > 0$ is independent of $\ell$. 
	Then there exists a final level $L > 0$ and number of samples $m_1$ to achieve a MSE of $\mathbb{E}[|\mce_L - \mathbb{E}[Z]|^2] \leq \varepsilon^2$ with costs bounded by
	\begin{equation} \label{equation_mc_asymptotic_cost}
	\operatorname{Cost}(\mce_L) \leq c \varepsilon^{-2 - \gamma^{\operatorname{Cost}} / \gamma^{\operatorname{Bias}}}.
	\end{equation}
\end{corollary}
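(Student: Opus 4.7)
The plan is to carry out the standard bias--variance balancing argument for a single-level Monte Carlo estimator, which is essentially the $L=1$ specialization of the MLMC argument in \cite[Subsection 2.1]{Cliffe_2011}. The assumptions \eqref{equation_M1_mc}--\eqref{equation_M3_mc} are exactly the ingredients needed for this classical computation, and nothing beyond elementary algebra is required once the splitting is in place.

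First I would write the MSE as the sum of squared bias and variance,
\begin{equation*}
\mathbb{E}\bigl[|\mce_L - \mathbb{E}[Z]|^2\bigr]
= (\mathbb{E}[Z_L] - \mathbb{E}[Z])^2 + \frac{\operatorname{Var}(Z_L)}{m_1},
\end{equation*}
using that $\mce_L$ is an unbiased estimator of $\mathbb{E}[Z_L]$ and that its variance is $\operatorname{Var}(Z_L)/m_1$ by independence of the samples. Applying \eqref{equation_M1_mc} and \eqref{equation_M2_mc} yields an upper bound of the form $c\,2^{-2L\gamma^{\operatorname{Bias}}} + c/m_1$.

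Next I would balance the two error contributions by forcing each to be at most $\varepsilon^2/2$. The bias constraint determines the smallest admissible level: choosing $L$ as the smallest integer such that $c\,2^{-2L\gamma^{\operatorname{Bias}}} \leq \varepsilon^2/2$ yields $2^L \leq c\,\varepsilon^{-1/\gamma^{\operatorname{Bias}}}$. The variance constraint determines the sample size: choosing $m_1$ as the smallest integer such that $c/m_1 \leq \varepsilon^2/2$ yields $m_1 \leq c\,\varepsilon^{-2}$ (the ceiling incurring only a constant factor for $\varepsilon$ small enough).

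Finally, I would combine these bounds with \eqref{equation_M3_mc}, noting that $\operatorname{Cost}(\mce_L) = m_1 \operatorname{Cost}(Z_L) \leq c\,m_1\,2^{L\gamma^{\operatorname{Cost}}}$, and plug in the bounds on $m_1$ and $2^L$ derived above to obtain
\begin{equation*}
\operatorname{Cost}(\mce_L)
\leq c\,\varepsilon^{-2}\,\bigl(\varepsilon^{-1/\gamma^{\operatorname{Bias}}}\bigr)^{\gamma^{\operatorname{Cost}}}
= c\,\varepsilon^{-2 - \gamma^{\operatorname{Cost}}/\gamma^{\operatorname{Bias}}},
\end{equation*}
which is \eqref{equation_mc_asymptotic_cost}. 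There is no real obstacle here; the only minor bookkeeping issue is tracking the constants from the rounding of $L$ and $m_1$ to integers, but this is absorbed into the generic constant $c$. The proof could alternatively be viewed as the degenerate case of \Cref{theorem_saob_complexity_rates} with a single model group, $\gamma^{\operatorname{Var}}=0$, and a reinterpretation of the level index; however, the direct argument above is shorter and more transparent.
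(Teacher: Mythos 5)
Your proof is correct and is exactly the classical bias--variance balancing argument that the paper relies on by citing \cite[Subsection 2.1]{Cliffe_2011}; the decomposition, the choice of $L$ from the bias constraint, the choice of $m_1$ from the variance constraint, and the final cost computation all match the standard treatment. The only bookkeeping detail, absorbing the integer rounding of $m_1$ into the generic constant for small $\varepsilon$, is handled adequately.
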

Using \Cref{corollary_saob_complexity_optimal} we may bound the complexity of the SAOB by the complexity of MC.
\begin{corollary}[SAOB cost upper bounded by MC] \label{corollary_saob_bound_mc}
Let the assumptions of \Cref{corollary_mc_complexity_rate} be true. Then the $\saob_L$ and $\saobk{q}_L$ with $q \geq 1$ achieve a MSE of $\varepsilon^2$ with a cost not larger than the cost of the MC estimator in \eqref{equation_mc_asymptotic_cost}.
\end{corollary}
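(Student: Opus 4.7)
The plan is to invoke the comparison principle \Cref{corollary_saob_complexity_optimal} with the standard MC estimator $\mce_L$ serving as the reference linear unbiased estimator. As described in \Cref{subsection_monte_carlo}, the MC estimator uses the single model group $S^1 = \{L\}$ with coefficient vector $\alpha^L = e_L$ and is therefore a linear unbiased estimator of $e_L^T \mu = \mu_L$ in the sense of \eqref{equation_available_samples}, which matches the $\alpha = e_L$ used by $\saob_L$.

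First I would apply \Cref{corollary_mc_complexity_rate} to produce a level $L$ and sample count $m_1$ such that $\mathbb{E}[|\mce_L - \mathbb{E}[Z]|^2] \leq \varepsilon^2$ and $\operatorname{Cost}(\mce_L) \leq \phi(\varepsilon) := c \varepsilon^{-2-\gamma^{\operatorname{Cost}}/\gamma^{\operatorname{Bias}}}$. Feeding $\widehat{\mu}_\alpha = \mce_L$ and this $\phi$ into \Cref{corollary_saob_complexity_optimal} then yields $\operatorname{Cost}(\saob_L) \leq \operatorname{Cost}(\mce_L) \leq \phi(\varepsilon)$ while preserving the MSE bound, which is precisely the claim for $\saob_L$. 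For the coupling-restricted variant $\saobk{q}_L$ with $q \geq 1$, the key observation is that $\mce_L$ couples only $|S^1| = 1 \leq q$ models per evaluation, so $\mce_L$ also belongs to the restricted class considered in \Cref{remark_saob_kappa_complexity}; replacing the appeal to \Cref{theorem_saob_variance_minial} by its $q$-coupled analogue from that remark and repeating the same bias-variance decomposition gives $\operatorname{Cost}(\saobk{q}_L) \leq \operatorname{Cost}(\mce_L) \leq \phi(\varepsilon)$.

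I do not expect any genuine obstacle: the argument is a direct instantiation of a comparison principle that has already been proved. The only item requiring a moment's attention is verifying that $\mce_L$ lies in both admissible classes, which is immediate because its sole model group $\{L\}$ has cardinality one, so the substantive work is entirely contained in \Cref{corollary_saob_complexity_optimal} and \Cref{remark_saob_kappa_complexity}.
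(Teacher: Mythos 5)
Your proposal is correct and matches the paper's intended argument exactly: the paper justifies this corollary by the single sentence preceding it, namely that \Cref{corollary_saob_complexity_optimal} applied with $\mce_L$ as the reference estimator bounds the SAOB cost by the MC cost, and your additional observation that $|S^1|=1$ places $\mce_L$ in the $q$-coupled class of \Cref{remark_saob_kappa_complexity} for every $q \geq 1$ is precisely the right way to cover the $\saobk{q}_L$ case.
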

\subsection{Multilevel Monte Carlo estimator} \label{subsection_multilevel_monte_carlo}
The MLMC estimator \cite{giles_2008, giles_2015} uses the vector $\alpha^L = e_L$ together with
\begin{equation*}
\begin{aligned}
S^1 &= \{1 \}, &&\quad \beta^1 = e_1, &\\
S^k &= \{k - 1, k\}, &&\quad \beta^k = e_k - e_{k - 1}, &\quad k = 2,\dots,L.
\end{aligned}
\end{equation*}
The estimator reads
\begin{equation*}
\mlmce_L = \sum_{k = 2}^L \frac{1}{m_k} \sum_{i = 1}^{m_k} (Z_k(\omega^k_i) - Z_{k - 1}(\omega^k_i)) + \frac{1}{m_1} \sum_{i = 1}^{m_1} Z_1(\omega^1_i).
\end{equation*} 
We recall the well known MLMC complexity theorem.
\begin{corollary}[MLMC asymptotic cost, {\cite[Theorem 1]{Cliffe_2011}}] \label{corollary_mlmc_complexity_rates}
	Assume that there exist positive constants $\gamma^{\operatorname{Bias}}, \gamma^{\operatorname{Var}}$ and $\gamma^{\operatorname{Cost}}$ such that the following statements hold for all $\ell \in \mathbb{N}$,
	\begin{align}
	\tag{MLMC, M1} \label{equation_M1_mlmc} |\mathbb{E}[Z_\ell] - \mathbb{E}[Z]| \leq c 2^{- \ell \gamma^{\operatorname{Bias}}}, \\
	\tag{MLMC, M2} \label{equation_M2_mlmc} \operatorname{Var}(Z_\ell - Z_{\ell - 1}) \leq c 2^{- \ell \gamma^{\operatorname{Var}}}, \\
	\tag{MLMC, M3} \label{equation_M3_mlmc} \operatorname{Cost}(Z_\ell) \leq c 2^{\ell \gamma^{\operatorname{Cost}}}. 
	\end{align}		
	where the constant $c > 0$ is independent of $\ell$. 
	Then there exists a final level $L > 0$ and numbers of samples $m_1, \dots, m_L$ to achieve a MSE of $\mathbb{E}[|\mlmce_L - \mathbb{E}[Z]|^2] \leq \varepsilon^2$ with costs bounded by
	\begin{equation} \label{equation_mlmc_asymptotic_cost}
	\operatorname{Cost}(\mlmce_L) \leq c \varepsilon^{- \gamma^{\operatorname{Cost}} / \gamma^{\operatorname{Bias}}} + c \begin{cases}
	\varepsilon^{-2}, \quad &\text{if } \gamma^{\operatorname{Cost}} < \gamma^{\operatorname{Var}}, \\
	\varepsilon^{-2} (\log \varepsilon)^2, \quad &\text{if } \gamma^{\operatorname{Cost}} = \gamma^{\operatorname{Var}}, \\
	\varepsilon^{-2 - \frac{\gamma^{\operatorname{Cost}} - \gamma^{\operatorname{Var}}}{\gamma^{\operatorname{Bias}}}}, \quad &\text{if } \gamma^{\operatorname{Cost}} > \gamma^{\operatorname{Var}}.
	\end{cases} 
	\end{equation}
\end{corollary}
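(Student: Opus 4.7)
The plan is to obtain this corollary as a direct specialisation of \Cref{theorem_saob_complexity_rates} to the MLMC coefficients. The only work is to verify that the abstract hypotheses \eqref{equation_M0_saob}--\eqref{equation_M3_saob} reduce, for the MLMC choice of $\alpha^L$, $S^k$, and $\beta^k$, to the MLMC hypotheses \eqref{equation_M1_mlmc}--\eqref{equation_M3_mlmc}.

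First I would check the bias representation \eqref{equation_M0_saob} by a telescoping computation:
\begin{equation*}
\sum_{k=1}^{L} \beta^k \;=\; e_1 + \sum_{k=2}^{L}(e_k - e_{k-1}) \;=\; e_L \;=\; \alpha^L,
\end{equation*}
and each $\beta^k$ is supported on $S^k$ by construction. The bias assumption \eqref{equation_M1_saob} then reads $|(\alpha^L)^T\mu - \mathbb{E}[Z]| = |\mathbb{E}[Z_L] - \mathbb{E}[Z]|$, which is exactly \eqref{equation_M1_mlmc}, so the rate $\gamma^{\operatorname{Bias}}$ carries over unchanged.

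Next I would verify the per-group variance bound \eqref{equation_M2_saob}. For $k \geq 2$ we have $(\beta^k)^T C \beta^k = \operatorname{Var}(Z_k - Z_{k-1})$, which is bounded by \eqref{equation_M2_mlmc}. The case $k=1$ requires a separate comment: $(\beta^1)^T C \beta^1 = \operatorname{Var}(Z_1)$ is a finite constant by assumption, so it can be absorbed into a (possibly enlarged) generic constant $c$ to give $\operatorname{Var}(Z_1) \leq c\, 2^{-\gamma^{\operatorname{Var}}}$. The per-group cost bound \eqref{equation_M3_saob} follows similarly from \eqref{equation_M3_mlmc}: for $k \geq 2$, $W_k = \operatorname{Cost}(Z_k) + \operatorname{Cost}(Z_{k-1}) \leq c\, 2^{k \gamma^{\operatorname{Cost}}}$ after adjusting the constant, and $W_1 = \operatorname{Cost}(Z_1) \leq c\, 2^{\gamma^{\operatorname{Cost}}}$.

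With \eqref{equation_M0_saob}--\eqref{equation_M3_saob} now in force with the same constants $\gamma^{\operatorname{Bias}}$, $\gamma^{\operatorname{Var}}$, $\gamma^{\operatorname{Cost}}$ as in the MLMC hypotheses, applying \Cref{theorem_saob_complexity_rates} to the sequence $(\mlmce_L)_{L=1}^\infty$ yields the existence of $L$ and sample numbers $m_1,\dots,m_L$ realising the MSE bound $\varepsilon^2$ with cost bounded exactly as in \eqref{equation_mlmc_asymptotic_cost}. There is no genuine obstacle in this argument; the only subtlety is the boundary treatment of the level $k=1$ in \eqref{equation_M2_saob} and \eqref{equation_M3_saob}, which is handled by enlarging the generic constant to absorb the finite values $\operatorname{Var}(Z_1)$ and $\operatorname{Cost}(Z_1)$.
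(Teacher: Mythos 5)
Your proposal is correct and matches the paper's intent: the paper states this corollary by citing \cite[Theorem 1]{Cliffe_2011} directly, and its own \Cref{theorem_saob_complexity_rates} is precisely the generalization of that result obtained by replacing $\operatorname{Var}(Z_\ell - Z_{\ell-1})$ with $(\beta^k)^T C \beta^k$, so your specialization back to the MLMC coefficients (telescoping for \eqref{equation_M0_saob}, the identity $(\beta^k)^T C \beta^k = \operatorname{Var}(Z_k - Z_{k-1})$ for \eqref{equation_M2_saob}, and absorbing the $k=1$ terms into the generic constant) recovers exactly the same argument. No gaps.
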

Note that the complexity of the MLMC estimator is in general smaller compared to the complexity of the MC estimator.
However, \eqref{equation_M2_mlmc} is a stronger assumption compared to \eqref{equation_M2_mc}. 
Fortunately, this stronger assumption is often satisfied if the model $Z_\ell$ is derived from the discretization of an ODE or PDE and thus satisfies the error estimate
\begin{equation*}
\operatorname{Var}(Z_\ell - Z_{\ell - 1}) \leq \mathbb{E}[(Z_\ell - Z_{\ell - 1})^2] \leq c (\mathbb{E}[(Z_\ell - Z)^2] + \mathbb{E}[(Z_{\ell - 1} - Z)^2]) \leq c 2^{- \ell \gamma^{\operatorname{Var}}},
\end{equation*}
where $\gamma^{\operatorname{Var}} > 0$. We generalize this idea in the next subsection, where we also obtain rates for $\gamma^{\operatorname{Bias}}$ and $\gamma^{\operatorname{Var}}$.
Using again \Cref{corollary_saob_complexity_optimal} gives a complexity bound for the SAOB.
\begin{corollary}[SAOB cost upper bounded by MLMC] \label{corollary_saob_bound_mlmc}
Let the assumptions of \Cref{corollary_mlmc_complexity_rates} be true. Then the $\saob_L$ and $\saobk{q}_L$ with $q \geq 2$ achieve a MSE of $\varepsilon^2$ with a cost not larger than the cost of the MLMC estimator in \eqref{equation_mlmc_asymptotic_cost}.
\end{corollary}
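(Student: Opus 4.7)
The plan is to apply the comparison principle of \Cref{corollary_saob_complexity_optimal} (together with \Cref{remark_saob_kappa_complexity} for the coupling-constrained case), using the MLMC estimator as the explicit witness for the upper bound $\phi(\varepsilon)$.

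First I would check that $\mlmce_L$ fits the framework of \eqref{equation_estimator_general}. With the model groups $S^1=\{1\}$ and $S^k=\{k-1,k\}$ for $k=2,\dots,L$, the coefficients $\beta^1=e_1$, $\beta^k=e_k-e_{k-1}$ for $k\ge 2$, and $\beta^k=0$ for $k>L$, the estimator $\mlmce_L$ is a linear combination of MC estimators over independent evaluations of the model groups $Z^k(\omega_i^k)$. The identity $\alpha^L=e_L=\sum_{k=1}^L\beta^k$ holds by telescoping, so $\mlmce_L$ is unbiased for $e_L^T\mu=\mu_L$. In particular $\mlmce_L$ belongs to the admissible class of linear unbiased estimators considered in \Cref{theorem_saob_variance_minial}, and moreover every active model group satisfies $|S^k|\le 2$, so it is also admissible under the coupling constraint $q\ge 2$ of \Cref{remark_saob_kappa_complexity}.

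Next, under the hypotheses \eqref{equation_M1_mlmc}--\eqref{equation_M3_mlmc}, \Cref{corollary_mlmc_complexity_rates} furnishes a choice of final level $L$ and sample numbers $m_1,\dots,m_L$ such that $\mathbb{E}[|\mlmce_L-\mathbb{E}[Z]|^2]\le\varepsilon^2$ while $\operatorname{Cost}(\mlmce_L)\le\phi(\varepsilon)$, where $\phi(\varepsilon)$ is the right-hand side of \eqref{equation_mlmc_asymptotic_cost}. I would then invoke \Cref{corollary_saob_complexity_optimal} with this $\mlmce_L$ as the reference estimator and $\alpha=e_L$: choosing the budget $p=\operatorname{Cost}(\mlmce_L)$ in the SAOB optimization problem \eqref{min} yields $\operatorname{Cost}(\saob_L)\le\phi(\varepsilon)$ and $\mathbb{E}[|\saob_L-\mathbb{E}[Z]|^2]\le\varepsilon^2$, since the bias of $\saob_L$ agrees with that of $\mlmce_L$ and the variance does not exceed it by construction.

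Finally, for the coupling-constrained variant $\saobk{q}_L$ with $q\ge 2$, since $\mlmce_L$ itself uses only model groups of size at most $2\le q$, it is feasible for the corresponding restricted optimization problem. The analogous comparison stated in \Cref{remark_saob_kappa_complexity} then gives $\operatorname{Var}(\saobk{q}_L)\le\operatorname{Var}(\mlmce_L)$ at the budget $p=\operatorname{Cost}(\mlmce_L)$, and the same MSE/cost conclusion follows. There is no real obstacle here; the argument is a direct application of the comparison principle, and the only point requiring care is to make the alignment of $\alpha^L$, the sample set in \eqref{equation_available_samples}, and the coupling cardinality $|S^k|\le 2$ explicit before quoting \Cref{corollary_saob_complexity_optimal} and \Cref{remark_saob_kappa_complexity}.
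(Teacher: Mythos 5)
Your argument is correct and is precisely the route the paper takes: the corollary is stated as an immediate consequence of \Cref{corollary_mlmc_complexity_rates} combined with the comparison principle of \Cref{corollary_saob_complexity_optimal} (and \Cref{remark_saob_kappa_complexity} for the constrained case), using $\mlmce_L$ as the reference estimator with budget $p=\operatorname{Cost}(\mlmce_L)$. Your added care in checking that $\mlmce_L$ lies in the admissible class of \eqref{equation_available_samples} with $|S^k|\le 2\le q$ is exactly the verification the paper leaves implicit.
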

\subsection{Richardson extrapolation estimator} \label{subsection_re_estimators}
Richardson extrapolation (RE) \cite{Asadzadeh_2009,lemaire_2017, Richardson_1911,Romberg_1955} is a well known technique to improve the accuracy of a collection of parametric models . 
The idea is to linearly combine models with a low accuracy, and to increase the convergence rate to the truth $Z$. 
We remark that RE was already used in \cite{giles_2008, lemaire_2017} to increase the bias rate $\gamma^{\operatorname{Bias}}$. 
The analysis in this section is similar to the analysis of MLMC. We replace the assumptions \eqref{equation_M1_mlmc}, \eqref{equation_M2_mlmc} and \eqref{equation_M3_mlmc} with the following assumptions.

\begin{assumption}[Pathwise Asymptotic Expansion] \label{assumption_re_expansion}
\begin{itemize}
\item[$(i)$] There exists $q \in \mathbb{N}$, rates $0 = \gamma^1 < \dots < \gamma^q$ and random variables $c_2,\dots,c_q$ with bounded second moment such that it holds
\begin{equation} \label{equation_re_expansion2}
Z_\ell(\omega) = Z(\omega) + \sum_{j = 2}^{q - 1} c_j(\omega) 2^{- \ell \gamma^j} + \mathcal{O}(2^{- \ell \gamma^q}), \quad \text{as }\ell \rightarrow +\infty.
\end{equation}
\item[$(ii)$] The costs for an evaluation of $Z_\ell$ are bounded by
\begin{equation} \label{equation_re_geometric_cost}
\operatorname{Cost}(Z_\ell) \leq c 2^{\ell \gamma^{\operatorname{Cost}}}.
\end{equation}
\end{itemize}
\end{assumption}
The $\mathcal{O}$-notation in \eqref{equation_re_expansion2} has to be understood in the $L^2$-sense for $\ell \rightarrow +\infty$, that is,
\begin{equation*}
\mathbb{E}\left[\left( Z_\ell(\omega) - Z(\omega) - \sum_{j = 2}^{q - 1} c_j(\omega) 2^{- \ell \gamma^j} \right)^2 \right] \leq c 2^{- 2 \ell \gamma^q} \quad \text{for all } \ell \text{ large enough}.
\end{equation*}
Note that \eqref{equation_re_geometric_cost} is in fact \eqref{equation_M3_mlmc}.
For a fixed event $\omega$ the assumption in \eqref{equation_re_expansion2} is classical in the theory of Richardson extrapolation.
It has been adapted to our setting where we deal with realizations of random variables in contrast to deterministic quantities.

The idea of RE is to linearly combine the models $Z_\ell$ to remove the term $\sum_{j = 2}^q c_j(\omega) 2^{- \ell \gamma^j}$ in \eqref{equation_re_expansion2}. 
To this end we define the RE coefficients
\begin{equation} \label{equation_re_vectors}
v^{k, q} := \begin{cases}
0, &\quad \text{if } k = 0, \\
e_1, &\quad \text{if } k = 1, \\
(2^{\gamma^k} D v^{k - 1, q} - v^{k - 1, q})/(2^{\gamma^k} - 1), &\quad \text{if } 1 < k < q, \\
D v^{k - 1, q}, &\quad \text{if } k \geq q,
\end{cases}
\end{equation}
where $D$ is a down shift matrix
\begin{equation}
D := \begin{pmatrix}
0 & 0 \\
I_{L - 1, L - 1} & 0
\end{pmatrix} \in \mathbb{R}^{L \times L}.
\end{equation}
The idea of the RE estimator is analogous to MLMC in the sense that we also use a telescoping sum.
Let $\alpha^L = v^{L, q}$ and
\begin{equation} \label{equation_re_model_groups}
\begin{aligned}
S^1 &= \{1 \}, &&\; \beta^1 = v^{1, q}, &\\
S^k &= \{\max\{k - q + 1, 1\}, \max\{k - q + 2, 1\}, \dots , k\}, &&\; \beta^k = v^{k, q} - v^{k - 1, q}, &\quad k = 2, \dots, L.
\end{aligned}
\end{equation}
The $\operatorname{RE,q}$ estimator is then defined as
\begin{equation*}
\ree{q}_{v^{L, q}} = \sum_{k = 1}^L \sum_{\ell \in S^k} (v^{k, q}_\ell - v^{k - 1, q}_\ell) \frac{1}{m_k} \sum_{i = 1}^{m_k} Z_\ell(\omega^k_i).
\end{equation*} 
A close inspection shows that for $q=2$ this estimator is actually the MLMC estimator. 
The $\operatorname{RE}, q$ couples at most $q$ models such that $|S^k| \leq q$ for $k = 1, \dots, L$. 
We have the following result.
\begin{lemma} \label{lemma_re_bias_variance}
Let \Cref{assumption_re_expansion} be true. 
Then the following estimates hold
\begin{align}
\tag{RE, M1} \label{equation_M1_re} |(v^{k, q})^T \mu - \mathbb{E}[Z]| &\leq c 2^{-k \gamma^q}, \\
\tag{RE, M2} \label{equation_M2_re} (v^{k, q} - v^{k - 1, q})^T C (v^{k, q} - v^{k - 1, q}) &\leq c 2^{-2 k \gamma^q}, \\
\tag{RE, M3} \label{equation_M3_re} W_k &\leq c 2^{k \gamma^{\operatorname{Cost}}}.
\end{align}		
\end{lemma}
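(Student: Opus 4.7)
The plan is to attack the three bounds of \eqref{equation_M1_re}--\eqref{equation_M3_re} separately, with the bias and variance bounds both exploiting a common mechanism that relates the recursion \eqref{equation_re_vectors} to the family of ``test functions'' $f^j_\ell := 2^{-\ell \gamma^j}$ appearing in \eqref{equation_re_expansion2}. I would begin with three preliminary identities, each provable by straightforward induction on $k$: $(i)$ the coefficients of $v^{k,q}$ sum to one for every $k \geq 1$ (each Richardson combination $v^{k-1,q} \mapsto (2^{\gamma^k} D v^{k-1,q} - v^{k-1,q})/(2^{\gamma^k} - 1)$ preserves the sum, as does the pure shift, provided the support has not yet reached the maximal index $L$); $(ii)$ the shift matrix acts diagonally on each test function via $(Dv)^T f^j = 2^{-\gamma^j}\, v^T f^j$, by reindexing the sum; and $(iii)$ combining these gives $(v^{k,q})^T f^j = A_k^j\, 2^{-\gamma^j}$ with $A_k^j = \prod_{k'=2}^{k} (2^{\gamma^{k'} - \gamma^j} - 1)/(2^{\gamma^{k'}} - 1)$ for $1 \leq k < q$, which vanishes whenever $j \in \{2, \ldots, k\}$ because the factor $k' = j$ is zero. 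For $k \geq q$, iterating the shift yields $(v^{k,q})^T f^j = 2^{-(k-q+1)\gamma^j}\, (v^{q-1,q})^T f^j$, which therefore vanishes for every $j \in \{2, \ldots, q-1\}$.

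For \eqref{equation_M1_re}, I take the expectation in \eqref{equation_re_expansion2} and use $(v^{k,q})^T \mathbf{1} = 1$ to write $(v^{k,q})^T \mu - \mathbb{E}[Z] = \sum_{j=2}^{q-1} \mathbb{E}[c_j]\, (v^{k,q})^T f^j + (v^{k,q})^T r$, where $|r_\ell| \leq c\, 2^{-\ell \gamma^q}$. For $k \geq q$ the entire sum vanishes by the cancellation above, and since the support of $v^{k,q}$ is the length-$(q-1)$ window $\{k - q + 2, \ldots, k\}$ with uniformly bounded entries (they are shifted copies of $v^{q-1,q}$), Cauchy--Schwarz yields $|(v^{k,q})^T r| \leq c\, 2^{-k\gamma^q}$. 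The finitely many $k < q$ cases are absorbed into the constant, since $2^{-k\gamma^q}$ is bounded below there.

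For \eqref{equation_M2_re}, set $w := v^{k,q} - v^{k-1,q}$ and use the pathwise expansion together with $w^T \mathbf{1} = 0$ to obtain $w^T Z = \sum_{j=2}^{q-1} (w^T f^j)\, c_j + w^T r$, where $r_\ell = Z_\ell - Z - \sum_{j=2}^{q-1} c_j 2^{-\ell\gamma^j}$ satisfies $\mathbb{E}[r_\ell^2] \leq c\, 2^{-2\ell \gamma^q}$. For $k \geq q$ both $v^{k,q}$ and $v^{k-1,q}$ annihilate every $f^j$ in the relevant range, so $w^T Z = w^T r$ and
\begin{equation*}
(\beta^k)^T C \beta^k = \operatorname{Var}(w^T Z) \leq \mathbb{E}[(w^T r)^2] \leq \Bigl(\sum_\ell |w_\ell|\sqrt{\mathbb{E}[r_\ell^2]}\Bigr)^2 \leq c\, 2^{-2k\gamma^q},
\end{equation*}
again by the window-support argument, with the small-$k$ cases absorbed into the constant. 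The bound \eqref{equation_M3_re} is immediate: $W_k = \sum_{\ell \in S^k} w_\ell \leq c \sum_{\ell = 1}^{k} 2^{\ell\gamma^{\operatorname{Cost}}} \leq c\, 2^{k\gamma^{\operatorname{Cost}}}$ by summing the geometric series.

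The technical heart of the argument is the combined induction that tracks how the two branches of \eqref{equation_re_vectors}, the Richardson combination for $k < q$ and the pure shift for $k \geq q$, interact with the test functions $f^j$; every later step rests on the resulting cancellation pattern. Once this is in hand, the Cauchy--Schwarz estimate for the residual is routine, because the support of $v^{k,q}$ (hence of $w$) remains in a window of fixed width around index $k$, which is what makes the leading scale equal to $2^{-k\gamma^q}$ rather than the coarser $2^{-\gamma^q}$ that a naive bound would produce.
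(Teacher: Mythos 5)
Your proof is correct, and it organizes the argument differently from the paper. The paper proves, by induction on $k$, a full pathwise expansion $\sum_{\ell} v^{k,q}_\ell Z_{\ell+\ell_0} = Z + \sum_{j=k+1}^{q-1} c^k_j 2^{-(k+\ell_0)\gamma^j} + \mathcal{O}(2^{-(k+\ell_0)\gamma^q})$, introducing new random coefficients $c^k_j$ at each Richardson step and a shift parameter $\ell_0$ to handle the regime $k \geq q$; the bias and variance bounds are then read off from this expansion (with a crude absorption of the finitely many cases $k<q$, as you also do). You instead separate the deterministic linear algebra from the stochastics: you compute the action of the recursion \eqref{equation_re_vectors} on the geometric test sequences $f^j_\ell = 2^{-\ell\gamma^j}$, obtaining the explicit product formula $(v^{k,q})^T f^j = 2^{-\gamma^j}\prod_{k'=2}^{k}(2^{\gamma^{k'}-\gamma^j}-1)/(2^{\gamma^{k'}}-1)$ whose vanishing factors give exact annihilation, together with the normalization $(v^{k,q})^T\mathbf{1}=1$ and the diagonal action of the shift; the random coefficients $c_j$ then enter only through the scalars $(v^{k,q})^T f^j$ and the remainder is handled by a Minkowski/Cauchy--Schwarz bound over the fixed-width support window (your use of the name ``Cauchy--Schwarz'' for what is really the triangle inequality in $L^2$ is cosmetic). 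The two proofs exploit the identical cancellation mechanism; yours is arguably more transparent for the three stated bounds because the annihilation is exhibited in closed form, while the paper's inductive expansion carries slightly more information (explicit recursions for the surviving coefficients $c^k_j$), which the authors reuse informally in \Cref{section_re_convergence_to_saob} when arguing that the minimizing linear combinations are asymptotically unique. The cost bound \eqref{equation_M3_re} is the same geometric-series computation in both.
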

\begin{proof}
See \Cref{appendix_proof_lemma_re_bias_variance}.
\end{proof}

We now state complexity rates for the RE estimator which can be proved using \Cref{lemma_re_bias_variance} and standard arguments.
\begin{corollary}[RE asymptotic cost] \label{corollary_re_complexity_rates}
	Let \Cref{assumption_re_expansion} or \eqref{equation_M1_re}, \eqref{equation_M2_re} and \eqref{equation_M3_re} be true. 
	Then there exists a final level $L > 0$ and numbers of samples $m_1, \dots, m_L$ to achieve a MSE of $\mathbb{E}[|\ree{q}_{v^{L, q}} - \mathbb{E}[Z]|^2] \leq \varepsilon^2$ with costs bounded by
	\begin{equation} \label{equation_re_asymptotic_cost}
	\operatorname{Cost}(\ree{q}_{v^{L, q}}) \leq c \varepsilon^{- \gamma^{\operatorname{Cost}} / \gamma^q} + c \begin{cases}
	\varepsilon^{-2}, \quad &\text{if } \gamma^{\operatorname{Cost}} < 2 \gamma^q, \\
	\varepsilon^{-2} (\log \varepsilon)^2, \quad &\text{if } \gamma^{\operatorname{Cost}} = 2 \gamma^q, \\
	\varepsilon^{-2 - \frac{\gamma^{\operatorname{Cost}} - 2 \gamma^q}{\gamma^q}}, \quad &\text{if } \gamma^{\operatorname{Cost}} > 2 \gamma^q.
	\end{cases} 
	\end{equation}
\end{corollary}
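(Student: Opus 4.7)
The plan is to deduce \Cref{corollary_re_complexity_rates} as a direct specialization of the general asymptotic cost bound in \Cref{theorem_saob_complexity_rates}, where the role of \Cref{lemma_re_bias_variance} is simply to supply concrete numerical values for the three abstract exponents $\gamma^{\operatorname{Bias}}$, $\gamma^{\operatorname{Var}}$, $\gamma^{\operatorname{Cost}}$ appearing in \eqref{equation_M1_saob}--\eqref{equation_M3_saob}. Accordingly, I would fix the sequence of bias vectors $\alpha^L = v^{L,q}$ and the coefficients $\beta^k = v^{k,q} - v^{k-1,q}$ together with the model groups $S^k$ defined in \eqref{equation_re_model_groups}, and verify the four hypotheses of \Cref{theorem_saob_complexity_rates} in order.

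First, for \eqref{equation_M0_saob} the telescoping identity
\[
\sum_{k=1}^L \beta^k \;=\; \sum_{k=1}^L \bigl( v^{k,q} - v^{k-1,q}\bigr) \;=\; v^{L,q} - v^{0,q} \;=\; \alpha^L,
\]
follows directly from $v^{0,q}=0$, and the support condition $\beta^k_j = 0$ for $j \notin S^k$ is a consequence of the shift structure in the recursion \eqref{equation_re_vectors}: the nonzero entries of $v^{k,q}$ live in rows $\max\{k-q+1,1\}, \dots, k$, so the same is true of $v^{k,q} - v^{k-1,q}$, which matches the definition of $S^k$. Next, \Cref{lemma_re_bias_variance} delivers exactly the bounds \eqref{equation_M1_saob}, \eqref{equation_M2_saob}, \eqref{equation_M3_saob} with
\[
\gamma^{\operatorname{Bias}} = \gamma^q, \qquad \gamma^{\operatorname{Var}} = 2\gamma^q, \qquad \gamma^{\operatorname{Cost}} = \gamma^{\operatorname{Cost}}.
\]

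With all hypotheses in place, I would apply \Cref{theorem_saob_complexity_rates} to obtain a final level $L$ and sample sizes $m_1,\dots,m_L$ such that $\mathbb{E}[|\ree{q}_{v^{L,q}}-\mathbb{E}[Z]|^2]\le\varepsilon^2$ with cost bounded by \eqref{equation_saob_complexity_expression}. Substituting $\gamma^{\operatorname{Bias}} = \gamma^q$ turns the leading term $\varepsilon^{-\gamma^{\operatorname{Cost}}/\gamma^{\operatorname{Bias}}}$ into $\varepsilon^{-\gamma^{\operatorname{Cost}}/\gamma^q}$, while the three regimes governed by the comparison of $\gamma^{\operatorname{Cost}}$ with $\gamma^{\operatorname{Var}} = 2\gamma^q$ become precisely the three cases in \eqref{equation_re_asymptotic_cost}; in particular, the exponent $2 + (\gamma^{\operatorname{Cost}} - \gamma^{\operatorname{Var}})/\gamma^{\operatorname{Bias}}$ in the last case reduces to $2 + (\gamma^{\operatorname{Cost}} - 2\gamma^q)/\gamma^q$, matching the claim.

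I do not expect any genuine obstacle here: the argument is entirely bookkeeping, and the delicate analytic work (the $L^2$-control of the Richardson residuals and the resulting variance decay) has already been carried out in \Cref{lemma_re_bias_variance}. The only point requiring mild care is checking the support statement in \eqref{equation_M0_saob} against the definition of $S^k$ near the boundary $k \le q$, where the truncation $\max\{k-q+1,1\}$ in \eqref{equation_re_model_groups} must be matched against the indices actually activated by the recursion \eqref{equation_re_vectors}; this is a routine induction on $k$.
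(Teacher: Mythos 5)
Your proposal is correct and matches the paper's intended argument exactly: the paper itself only remarks that the corollary ``can be proved using \Cref{lemma_re_bias_variance} and standard arguments,'' and your route of verifying \eqref{equation_M0_saob}--\eqref{equation_M3_saob} with $\gamma^{\operatorname{Bias}}=\gamma^q$, $\gamma^{\operatorname{Var}}=2\gamma^q$ and then invoking \Cref{theorem_saob_complexity_rates} is precisely that standard argument made explicit. Your care with the support condition near $k\le q$ (the truncation $\max\{k-q+1,1\}$ versus the indices activated by \eqref{equation_re_vectors}) is the right detail to check, and it goes through.
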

Observe that the complexity of the RE estimator is improved compared to the MLMC estimator since we can leverage the additional smoothness of the models $Z_\ell$ in assumption \eqref{equation_re_expansion2}. 
We obtain an increased bias rate of $\gamma^q$ instead of $\gamma^2$ and an increased variance rate of $2 \gamma^q$ instead of $2 \gamma^2$.  
Using \Cref{corollary_saob_complexity_optimal} we obtain another complexity bound for the SAOB.
\begin{corollary}[SAOB cost upper bounded by RE] \label{corollary_saob_bound_re}
Let the assumptions of \Cref{corollary_re_complexity_rates} be true. Then the $\saob_{v^{L, q}}$ and $\saobk{s}_{v^{L, q}}$ with $s \geq q$ achieve a MSE of $\varepsilon^2$ with a cost not larger than the cost of the RE estimator in \eqref{equation_re_asymptotic_cost}.
\end{corollary}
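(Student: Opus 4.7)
The plan is to treat this corollary as a direct instance of the comparison principle established in \Cref{corollary_saob_complexity_optimal} (and its coupling-restricted variant in \Cref{remark_saob_kappa_complexity}), with the RE estimator playing the role of the candidate linear unbiased estimator $\widehat{\mu}_\alpha$. The bias vector to plug in is $\alpha = v^{L,q}$, which is shared by both $\ree{q}_{v^{L, q}}$ and $\saob_{v^{L, q}}$; since both estimators target $(v^{L,q})^T \mu$, they have identical bias, and the MSE comparison reduces to a variance comparison.

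First I would verify the hypotheses. The RE estimator is a linear combination of the samples in \eqref{equation_available_samples} with coefficients $\beta^k = v^{k,q} - v^{k-1,q}$ supported on the model groups $S^k$ from \eqref{equation_re_model_groups}, and by the telescoping identity $\sum_{k=1}^L \beta^k = v^{L,q} = \alpha^L$ it is unbiased for $(v^{L,q})^T\mu$. By \Cref{corollary_re_complexity_rates}, there exist a level $L$ and sample numbers $m_1,\dots,m_L$ such that
\begin{equation*}
\mathbb{E}\bigl[|\ree{q}_{v^{L, q}} - \mathbb{E}[Z]|^2\bigr] \le \varepsilon^2, \qquad \operatorname{Cost}(\ree{q}_{v^{L, q}}) \le \phi(\varepsilon),
\end{equation*}
where $\phi(\varepsilon)$ is the right-hand side of \eqref{equation_re_asymptotic_cost}.

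For the unrestricted statement on $\saob_{v^{L, q}}$, I would then invoke \Cref{corollary_saob_complexity_optimal} with this $\widehat{\mu}_\alpha := \ree{q}_{v^{L, q}}$, choosing the budget $p = \operatorname{Cost}(\ree{q}_{v^{L, q}})$ in \eqref{min}. The conclusion of \Cref{corollary_saob_complexity_optimal} immediately yields $\operatorname{Cost}(\saob_{v^{L, q}}) \le \phi(\varepsilon)$ with the same MSE bound. For the coupling-restricted estimator $\saobk{s}_{v^{L, q}}$, the key observation is that $\ree{q}_{v^{L, q}}$ uses only model groups with $|S^k| \le q \le s$, and is therefore admissible in the class over which $\saobk{s}_{v^{L, q}}$ is optimal. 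Applying the variant of the comparison principle stated in \Cref{remark_saob_kappa_complexity} with the same budget $p$ then gives the desired cost bound for $\saobk{s}_{v^{L, q}}$.

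I expect no real obstacle here: once the admissibility of $\ree{q}_{v^{L, q}}$ in the respective optimization problems is checked (support of $\beta^k$ on $S^k$, unbiasedness, coupling constraint $|S^k|\le q\le s$), the corollary follows by direct quotation. The only subtlety worth making explicit is the choice $p = \operatorname{Cost}(\ree{q}_{v^{L, q}})$ in \eqref{min}, which ensures that the SAOB has at least as much budget as the RE estimator and hence, being variance-optimal in its admissible class, cannot incur a larger cost for the same MSE target.
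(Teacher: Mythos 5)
Your proposal is correct and follows exactly the route the paper intends: the corollary is stated as an immediate consequence of the comparison principle in \Cref{corollary_saob_complexity_optimal} (and \Cref{remark_saob_kappa_complexity} for the coupling-restricted case) applied to $\ree{q}_{v^{L,q}}$ with budget $p = \operatorname{Cost}(\ree{q}_{v^{L,q}})$. Your explicit verification of admissibility (unbiasedness via the telescoping sum and the coupling bound $|S^k|\le q\le s$) is a welcome detail the paper leaves implicit.
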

\subsection{Weighted Richardson extrapolation estimator}
Finally, we generalize the RE estimators to obtain different bias and variance rates. 
Note that for $s \in \mathbb{N}$ the definition \eqref{equation_re_vectors} shows that the vectors
\begin{equation*}
\{v^{1, s} - v^{0, s}, v^{2, s} - v^{1, s}, \dots, v^{L, s} - v^{L - 1, s}\}
\end{equation*}
form a basis of $\mathbb{R}^L$. 
Thus there exist weights $a_1, \dots, a_L$ such that for $t \in \mathbb{N}$ it holds
\begin{equation} \label{equation_re_basis}
v^{L, t} = \sum_{k = 1}^L a_k (v^{k, s} - v^{k - 1, s}).
\end{equation}
Now, we define the \textit{weighted RE estimator} 
\begin{equation*}
\ree{s}_{v^{L, t}} = \sum_{k = 1}^L a_k \sum_{\ell \in S^k } (v^{k, s}_\ell - v^{k - 1, s}_\ell) \frac{1}{m_\ell} \sum_{i = 1}^{m_\ell} Z_\ell(\omega^k_i),
\end{equation*}
which is a linear unbiased estimator for $(v^{L, t})^T \mu$. 
A result similar to \Cref{lemma_re_bias_variance} holds.
\begin{lemma} \label{lemma_weighted_re_bias_variance}
Let \Cref{assumption_re_expansion} be true and $t, s \leq q$. 
Then the following estimates hold:
\begin{align}
\tag{WRE, M1} \label{equation_M1_wre} |(v^{\ell, t})^T \mu - \mathbb{E}[Z]| &\leq c 2^{-\ell \gamma^t}, \\
\tag{WRE, M2} \label{equation_M2_wre} [a_k (v^{k, s} - v^{k - 1, s})]^T C [a_k (v^{k, s} - v^{k - 1, s})] &\leq c a^2_k  2^{-2 k \gamma^s}, \\
\tag{WRE, M3} \label{equation_M3_wre} W_k &\leq c 2^{k \gamma^{\operatorname{Cost}}}.
\end{align}		
Furthermore, if $s \leq t$ then $|a_k| \leq c$ with a constant $c$ independent of $k$. 
\end{lemma}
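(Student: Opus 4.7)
The bounds \eqref{equation_M1_wre}, \eqref{equation_M2_wre}, and \eqref{equation_M3_wre} follow almost directly from \Cref{lemma_re_bias_variance} applied with appropriate parameters, so the substantive task is the boundedness $|a_k|\le c$ when $s\le t$. For \eqref{equation_M1_wre}, I apply \Cref{lemma_re_bias_variance} with its parameter $q$ replaced by $t$, which is admissible since $t\le q$ ensures that \Cref{assumption_re_expansion} supplies the expansion to order $\gamma^t$. The same lemma with parameter $s$ gives $(v^{k,s}-v^{k-1,s})^T C (v^{k,s}-v^{k-1,s}) \le c\,2^{-2k\gamma^s}$, and multiplying both sides by $a_k^2$ yields \eqref{equation_M2_wre}. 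For \eqref{equation_M3_wre}, the set $S^k = \operatorname{supp}(v^{k,s}-v^{k-1,s})$ has cardinality at most $s$ with largest index $k$, so $W_k=\sum_{\ell\in S^k} w_\ell \le s\, c\, 2^{k\gamma^{\operatorname{Cost}}}$, and the factor $s$ is absorbed into the generic constant.

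For the boundedness of $|a_k|$, my plan is to establish the intermediate identity
\begin{equation*}
v^{L,t} = \sum_{i=0}^{t-s} \lambda_i^{(t)}\, v^{L-i,s} \qquad \text{for all } L \ge t,
\end{equation*}
where the weights $\lambda_0^{(t)},\dots,\lambda_{t-s}^{(t)}$ depend only on the rates $\gamma^s,\dots,\gamma^{t-1}$ and are in particular independent of $L$. The argument is by induction on $t\in\{s,\dots,q\}$. The base case $t=s$ is trivial with $\lambda_0^{(s)}=1$. The inductive step rests on the two-term Richardson recursion
\begin{equation*}
v^{L,t+1} = \frac{2^{\gamma^t}}{2^{\gamma^t}-1}\, v^{L,t} - \frac{1}{2^{\gamma^t}-1}\, v^{L-1,t},
\end{equation*}
valid for $L\ge t$, which I verify by unfolding the piecewise definition \eqref{equation_re_vectors}; inserting the inductive hypothesis on the right-hand side gives the identity at level $t+1$ with $\lambda_i^{(t+1)}$ determined by a finite, $L$-independent linear recursion in $\lambda_i^{(t)}$.

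Once the identity is in hand, the telescoping $v^{L-i,s} = \sum_{k=1}^{L-i}(v^{k,s}-v^{k-1,s})$ and coefficient matching yield
\begin{equation*}
a_k = \sum_{i=0}^{\min(t-s,\,L-k)} \lambda_i^{(t)},
\end{equation*}
a partial sum of the fixed finite sequence $(\lambda_i^{(t)})_{i=0}^{t-s}$. Consequently $|a_k|\le (t-s+1)\max_i|\lambda_i^{(t)}|$, a constant independent of $k$ and $L$. As a consistency check, evaluating the identity against the all-ones vector and using that the RE vectors preserve constants (so that $(v^{L,r})^T(1,\dots,1)^T=1$) forces $\sum_i \lambda_i^{(t)}=1$, which recovers the bulk value $a_k=1$ for $k\le L-(t-s)$.

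The main obstacle is verifying the two-term Richardson recursion uniformly for all $L\ge t$. The piecewise definition \eqref{equation_re_vectors} branches at $k=q$, so the boundary case $L=t$ (where $v^{L,t+1}$ is built using the rate $\gamma^L$ in its own defining recursion) has to be reconciled with the bulk case $L>t$ (where the top-level shift relation $v^{L,t+1}=Dv^{L-1,t+1}$ applies). Bringing both cases into the uniform two-term form requires careful tracking of how the shift operator $D$ interacts with the canonical RE stencils.
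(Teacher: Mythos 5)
Your proposal is correct, and for \eqref{equation_M1_wre}--\eqref{equation_M3_wre} it does exactly what the paper does: the paper's proof opens with ``We only have to prove the boundedness of $a_k$,'' i.e., it likewise treats these three bounds as instances of \Cref{lemma_re_bias_variance} with $q$ replaced by $t$ resp.\ $s$ (admissible since $t,s\leq q$ truncates the expansion in \Cref{assumption_re_expansion}). For the boundedness of $a_k$ the two arguments share the same structural core --- show that $v^{L,t}$ is an $L$-independent linear combination of the top few order-$s$ vectors, then telescope to read off each $a_k$ as a partial sum, so that $a_k$ is constant in the bulk and takes only finitely many values near $k=L$ --- but they reach that key identity by different mechanisms. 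The paper expands $v^{t,t}=\sum_{k=1}^{t}d_k v^{k,s}$ once, applies the shift $D^{L-t}$, rewrites $\sum_k a_k(v^{k,s}-v^{k-1,s})=\sum_k(a_k-a_{k+1})v^{k,s}$ by Abel summation, and matches coefficients using the linear independence of the $v^{k,s}$; this is shorter, but it leans on the commutation $D^{L-t}v^{k,s}=v^{L-t+k,s}$, which by \eqref{equation_re_vectors} holds verbatim only for $k\geq s-1$. Your order-raising recursion $v^{L,t+1}=\bigl(2^{\gamma^t}v^{L,t}-v^{L-1,t}\bigr)/(2^{\gamma^t}-1)$ --- obtained from $v^{t,t+1}=(2^{\gamma^t}Dv^{t-1,t}-v^{t-1,t})/(2^{\gamma^t}-1)$ together with $v^{t-1,t+1}=v^{t-1,t}$ and then shifting --- avoids that subtlety, uses only $t-s+1$ shifted vectors instead of $t$, and additionally pins down the bulk value $a_k=1$ via $\sum_i\lambda_i^{(t)}=1$, where the paper only concludes $a_k=a_{k+1}$. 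The ``main obstacle'' you flag dissolves on inspection: since $v^{k,t}=Dv^{k-1,t}$ already from $k=t$ on, the identities $D^{L-t+1}v^{t-1,t}=v^{L,t}$ and $D^{L-t}v^{t-1,t}=v^{L-1,t}$ cover the boundary case $L=t$ and the bulk case $L>t$ uniformly, so the recursion holds for all $L\geq t$; levels $L$ below a threshold depending only on $s,t$ contribute finitely many coefficients and are harmless.
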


\begin{proof}
See  \Cref{appendix_proof_lemma_weighted_re_bias_variance}.
\end{proof}

Note that in numerical experiments we observed that the property $|a_k| \leq c$ with a constant $c$ independent of $k$ also holds for $s > t$, however, we do not see a way to prove it. 
We directly obtain the analogous result of \Cref{corollary_re_complexity_rates}.
\begin{corollary}[Weighted RE asymptotic cost] \label{corollary_weighted_re_complexity_rates}
	Let \Cref{assumption_re_expansion} or \eqref{equation_M1_wre}, \eqref{equation_M2_wre} and \eqref{equation_M3_wre} be true with $s \leq t \leq q$. 
	Then there exists a final level $L > 0$ and numbers of samples $m_1, \dots, m_L$ to achieve a MSE of $\mathbb{E}[|\ree{s}_{v^{L, t}} - \mathbb{E}[Z]|^2] \leq \varepsilon^2$ with costs bounded by
	\begin{equation} \label{equation_weighted_re_asymptotic_cost}
	\operatorname{Cost}(\ree{s}_{v^{L, t}}) \leq c \varepsilon^{- \gamma^{\operatorname{Cost}} / \gamma^t} + c \begin{cases}
	\varepsilon^{-2}, \quad &\text{if } \gamma^{\operatorname{Cost}} < 2 \gamma^s, \\
	\varepsilon^{-2} (\log \varepsilon)^2, \quad &\text{if } \gamma^{\operatorname{Cost}} = 2 \gamma^s, \\
	\varepsilon^{-2 - \frac{\gamma^{\operatorname{Cost}} - 2 \gamma^s}{\gamma^t}}, \quad &\text{if } \gamma^{\operatorname{Cost}} > 2 \gamma^s.
	\end{cases} 
	\end{equation}
\end{corollary}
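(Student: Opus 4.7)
The plan is to reduce Corollary 3.9 to a direct application of Theorem 2.10 by identifying the weighted RE estimator as an instance of the general linear unbiased estimator framework with appropriately chosen coefficients $\beta^k$ and bias vector $\alpha^L$.

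First I would set $\alpha^L = v^{L,t}$ and $\beta^k = a_k (v^{k,s} - v^{k-1,s})$, and verify condition \eqref{equation_M0_saob}. The identity $\alpha^L = \sum_{k=1}^L \beta^k$ is precisely the defining expansion \eqref{equation_re_basis} of the weights $a_k$. The support condition $\beta^k_j = 0$ for $j \notin S^k$ requires a brief induction on the recursion \eqref{equation_re_vectors}: one checks that $v^{k,s}$ has support in $\{\max\{k-s+2,1\},\dots,k\}$ for $k \ge 1$, using that multiplication by $D$ shifts supports down by one while the convex-type combination in the middle branch of \eqref{equation_re_vectors} combines $v^{k-1,s}$ and $D v^{k-1,s}$. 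Taking the difference $v^{k,s} - v^{k-1,s}$ therefore yields support in $S^k = \{\max\{k-s+1,1\},\dots,k\}$, and multiplying by the scalar $a_k$ preserves this.

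Next I would translate \Cref{lemma_weighted_re_bias_variance} into the rates required by Theorem 2.10. Condition \eqref{equation_M1_wre} applied at level $L$ gives \eqref{equation_M1_saob} with $\gamma^{\operatorname{Bias}} = \gamma^t$. Condition \eqref{equation_M3_wre} is exactly \eqref{equation_M3_saob}. For the variance rate, combine \eqref{equation_M2_wre} with the uniform bound $|a_k| \le c$ guaranteed by \Cref{lemma_weighted_re_bias_variance} under the hypothesis $s \le t$: this yields
\begin{equation*}
(\beta^k)^T C \beta^k \le c\, a_k^2\, 2^{-2k\gamma^s} \le c\, 2^{-2k\gamma^s},
\end{equation*}
so \eqref{equation_M2_saob} holds with $\gamma^{\operatorname{Var}} = 2\gamma^s$.

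With these identifications, Theorem 2.10 delivers the existence of a final level $L$ and sample numbers $m_1,\dots,m_L$ such that the MSE of $\widehat{\mu}^{\operatorname{RE},s}_{v^{L,t}}$ is at most $\varepsilon^2$, and the cost bound \eqref{equation_saob_complexity_expression} specializes, after substituting $\gamma^{\operatorname{Bias}} = \gamma^t$ and $\gamma^{\operatorname{Var}} = 2\gamma^s$, to exactly \eqref{equation_weighted_re_asymptotic_cost}. The only genuinely nontrivial point in the whole argument is the uniform bound $|a_k| \le c$, which is already packaged in \Cref{lemma_weighted_re_bias_variance} precisely under the assumption $s \le t$; without it the factor $a_k^2$ in \eqref{equation_M2_wre} could destroy the geometric decay and the case distinction in Theorem 2.10 would no longer apply directly. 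Everything else is bookkeeping of rates and a short verification of the sparsity pattern of the RE increments, so I expect no further obstacles.
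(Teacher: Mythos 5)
Your proposal is correct and follows essentially the same route the paper intends: the paper obtains this corollary "directly" as the analogue of \Cref{corollary_re_complexity_rates}, i.e., by feeding the rates of \Cref{lemma_weighted_re_bias_variance} (with $\gamma^{\operatorname{Bias}}=\gamma^t$, $\gamma^{\operatorname{Var}}=2\gamma^s$, using $|a_k|\leq c$ for $s\leq t$) into the standard MLMC-type complexity argument of \Cref{theorem_saob_complexity_rates}. Your additional verification of the support pattern of $v^{k,s}-v^{k-1,s}$ and of the bias constraint via \eqref{equation_re_basis} is exactly the bookkeeping the paper leaves implicit.
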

The smallest asymptotic cost is achieved if $s = t = q$ with maximal $q$ such that \Cref{assumption_re_expansion} holds. 
In this case \Cref{corollary_weighted_re_complexity_rates} is identical to \Cref{corollary_re_complexity_rates}. 
The complexity bound for the SAOB follows again from \Cref{corollary_saob_complexity_optimal}.
\begin{corollary}[SAOB cost upper bounded by weighted RE] \label{corollary_saob_bound_weighted_re}
Let the assumptions of \Cref{corollary_weighted_re_complexity_rates} be true. Then the $\saob_{v^{L, t}}$ and $\saobk{s^\prime}_{v^{L, t}}$ with $s^\prime \geq s$ achieve a MSE of $\varepsilon^2$ with a cost not larger than the cost of the weighted RE estimator in \eqref{equation_weighted_re_asymptotic_cost}.
\end{corollary}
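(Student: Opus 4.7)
The plan is to imitate the proofs of \Cref{corollary_saob_bound_mc}, \Cref{corollary_saob_bound_mlmc}, and \Cref{corollary_saob_bound_re}, each of which is a direct application of the comparison principle in \Cref{corollary_saob_complexity_optimal}. First, I would invoke \Cref{corollary_weighted_re_complexity_rates} to obtain a final level $L$ and a sample allocation $m_1,\dots,m_L$ so that the weighted RE estimator $\ree{s}_{v^{L,t}}$ satisfies $\mathbb{E}[|\ree{s}_{v^{L,t}} - \mathbb{E}[Z]|^2]\leq\varepsilon^2$ with $\operatorname{Cost}(\ree{s}_{v^{L,t}})$ bounded by the right-hand side of \eqref{equation_weighted_re_asymptotic_cost}. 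Crucially, $\ree{s}_{v^{L,t}}$ is a linear unbiased estimator of $(v^{L,t})^T\mu$ that uses samples of the form \eqref{equation_available_samples} together with the model groups from \eqref{equation_re_model_groups} taken with parameter $s$; the recursion \eqref{equation_re_vectors} forces the support of $v^{k,s}-v^{k-1,s}$ to lie in $\{\max(k-s+1,1),\dots,k\}$, hence $|S^k|\leq s$ for every $k$.

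For the unconstrained SAOB, I would set $\alpha := v^{L,t}$ and $p := \operatorname{Cost}(\ree{s}_{v^{L,t}})$ in \eqref{min} and apply \Cref{corollary_saob_complexity_optimal} directly. Since $\saob_{v^{L,t}}$ and $\ree{s}_{v^{L,t}}$ are both unbiased estimators of $(v^{L,t})^T\mu$, they share the same bias with respect to $\mathbb{E}[Z]$, so the bias-variance decomposition reduces the MSE comparison to a variance comparison, which is handled by \Cref{theorem_saob_variance_minial}. For the coupling-constrained $\saobk{s'}_{v^{L,t}}$ with $s'\geq s$, I would invoke the extension described in \Cref{remark_saob_kappa_complexity}: because $|S^k|\leq s\leq s'$, the weighted RE estimator belongs to the admissible class of linear unbiased estimators coupling at most $s'$ models, and the same comparison argument yields the stated cost bound.

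Structurally there is no substantial obstacle: the result is a bookkeeping consequence of the comparison principle already used three times in this section. The only point requiring a short verification is the coupling bound $|S^k|\leq s$ for the model groups of the weighted RE estimator, which follows immediately from inspecting the recursion \eqref{equation_re_vectors}; everything else is a verbatim repetition of the argument in \Cref{corollary_saob_bound_re}.
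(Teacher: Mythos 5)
Your proposal is correct and follows exactly the route the paper intends: the paper gives no separate proof for this corollary, stating only that it ``follows again from \Cref{corollary_saob_complexity_optimal}'', i.e.\ the same comparison-principle argument (with \Cref{remark_saob_kappa_complexity} handling the coupling constraint) that you spell out. Your additional verification that $|S^k|\leq s$ for the weighted RE model groups is a correct and worthwhile detail the paper leaves implicit.
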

\section{Numerical experiments with a PDE in 2D space}\label{sec:numerics} 
In this section we verify the main results of this paper numerically. 
We revisit the example in \cite[Sec. 6.3]{SchadenUllmann:2020}. 
Recall that the random variable $Z$ is the domain average with realizations
\begin{equation}\label{qoi_elliptic}
Z(\omega) := \frac{1}{|D_{obs}|} \int_{D_{obs}} y(x, \omega) dx,
\end{equation}
where $D_{obs} := (3 / 4, 7 / 8) \times (7 / 8, 1)$ is a subset of the unit square domain $D := (0, 1)^2$. 
The function $y$ is a weak solution of the elliptic boundary value problem whose pathwise, strong form reads 
\begin{equation} \label{equation_elliptic_pde}
\begin{aligned}
- \operatorname{div}(a(x, \omega) \nabla y(x, \omega)) &= 1, \quad x \in D, \\
y(x, \omega) &= 0, \quad x \in \partial D
\end{aligned}
\end{equation}
for almost all $\omega$.
The diffusion coefficient is $a(x, \omega) := \exp(b(x, \omega))$, where $b$ is a mean zero Gaussian random field with Whittle--Mat\'ern covariance function \cite{Stein_1999} with smoothness parameter $\nu= 3 / 2$, variance $\sigma^2 = 1$, and correlation length $\rho = 0.5$. 
To discretize $Z$ we use a standard piecewise linear finite element (FE) discretization of \eqref{equation_elliptic_pde} with $L = 6$ different levels of accuracy. 
We obtain $Z_\ell$ by uniform refinement of the previous mesh, starting with a regular grid for $Z_1$. We define the bias as follows:
\begin{equation}\label{bias}
\operatorname{Bias}(Z_\ell) := |\mathbb{E}[Z_\ell] - \mathbb{E}[Z]|.
\end{equation}
The discretization data is summarized in \Cref{table_elliptic_data}. 
We use $10^5$ pilot samples to estimate the $\operatorname{Bias}$ in \eqref{bias}, the model covariance matrix $C$, and the average cost $w_\ell$ to compute a sample of $Z_\ell$. 
The cost for the pilot samples is not included in the subsequent analysis.
The fourth column in \Cref{table_elliptic_data} suggests the cost increase rate $\gamma^{\operatorname{Cost}} = 2$ which corresponds to a four-fold cost increase as expected in 2D space.

\begin{table} 
	\begin{center}
		\begin{tabular}{| c | r  c  c  c  c |}
			\hline
			Model & \#Nodes & $w_\ell$ & $w_\ell / w_{\ell - 1}$ & $\operatorname{Var}(Z_\ell)$ & $\operatorname{Bias}(Z_\ell)$ \\\hline
			$Z_1$ & 81 & 0.0019s & -- & $1.52 \cdot 10^{-2}$ &  $3.80 \cdot 10^{-3}$ \\ 
			$Z_2$ & 289 & 0.0025s & 1.33 & $2.35 \cdot 10^{-2}$  & $1.02 \cdot 10^{-3}$ \\ 
			$Z_3$ & 1089 & 0.0050s & 2.00 & $2.62 \cdot 10^{-2}$  &  $2.58 \cdot 10^{-4}$ \\  
			$Z_4$ & 4225 & 0.0185s & 3.70 & $2.69 \cdot 10^{-2}$  & $6.17 \cdot 10^{-5}$ \\ 
			$Z_5$ & 16641 & 0.0736s & 3.98 & $2.71 \cdot 10^{-2}$ & $1.24 \cdot 10^{-5}$ \\ 
			$Z_6$ & 66049 & 0.3149s & 4.28 & $2.71 \cdot 10^{-2}$ & $2.98 \cdot 10^{-6}$ \\ \hline 
		\end{tabular}
	\end{center}
	\caption{The number of finite element basis functions (\#Nodes) characterizes the discretization. $w_\ell$ is the average time to compute a realization of $Z_\ell$, and the cost increase factor is denoted by $w_\ell / w_{\ell - 1}$. The last two columns contain the estimates for the variance and bias.} \label{table_elliptic_data}
\end{table}

Since the random field $b$ is smooth and we integrate over $y$ in \eqref{qoi_elliptic}, we expect a RE expansion of the form
\begin{equation} \label{equation_re_numerical_guess}
Z_\ell(\omega) = Z(\omega) + c_2(\omega) 2^{- 2 \ell} + c_3(\omega) 2^{- 4 \ell} + o(2^{-4 \ell}) \quad \text{as } \ell \rightarrow +\infty.
\end{equation} 
If the expansion in \eqref{equation_re_numerical_guess} holds, we have $q = 3$ with rates $\gamma^1 = 0$, $\gamma^2 = 2$ and $\gamma^3 = 4$. 
Therefore we expect to observe the following rates according to \Cref{lemma_re_bias_variance}:
\begin{equation} \label{equation_re_numerical_guess_rates}
\begin{aligned}
\operatorname{Bias}(Z_\ell) \simeq 2^{- 2 \ell}, & \quad &\operatorname{Var}(Z_\ell - Z_{\ell - 1}) \simeq 2^{- 4 \ell}, \\
\operatorname{Bias}((v^{\ell, 3})^T (Z_j)_{j = 1}^\ell) \simeq 2^{- 4 \ell}, & \quad &\operatorname{Var}((v^{\ell, 3} - v^{\ell - 1, 3})^T (Z_j)_{j = 1}^\ell) \simeq 2^{- 8 \ell}
\end{aligned}
\end{equation}
and $\operatorname{Var}(Z_\ell) \simeq 1$. 
The notation $\phi_1(\ell) \simeq \phi_2(\ell)$ means that there exist constants $c_1, c_2 > 0$ such that
\begin{equation*}
\phi_1(\ell) \leq c_1 \phi_2(\ell), \quad \phi_2(\ell) \leq c_2 \phi_1(\ell), \quad \text{for all } \ell \text{ sufficiently large}.
\end{equation*}
\Cref{corollary_re_complexity_rates} can be applied if \eqref{equation_re_numerical_guess_rates} is true, which we assume in the remainder of this section. We provide a numerical verification of the rates in \Cref{figure_bias_variance_rates_numerical_guess}.
A formal proof of the rates in \eqref{equation_re_numerical_guess_rates} or the expansion in \eqref{equation_re_numerical_guess} is beyond the scope of this paper.
\begin{figure} 
	\begin{center}
		\includegraphics[width=1.0 \textwidth]{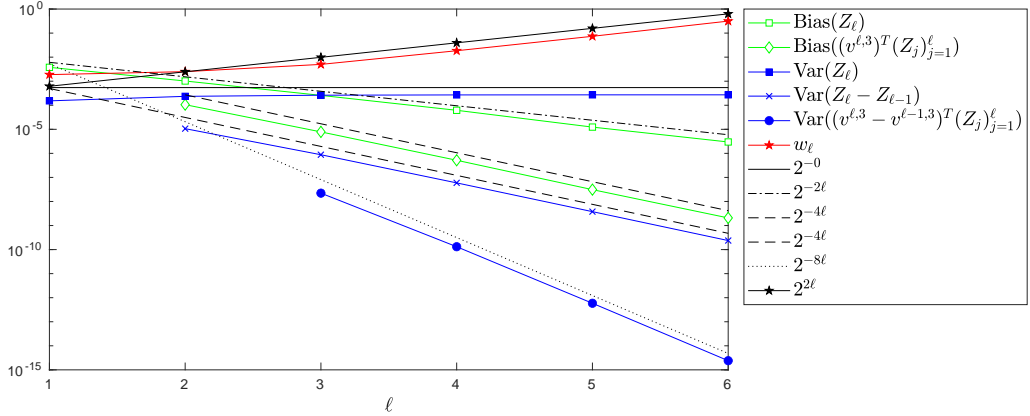}
	\end{center}
	\caption{Biases, variances and work w.r.t. the refinement level $\ell$.} 
	\label{figure_bias_variance_rates_numerical_guess}
\end{figure}

\subsection{Complexity of estimators}\label{comp_ell}
We now study the computational complexity of various unbiased estimators to approximate $\mu_\ell = \mathbb{E}[Z_\ell]$. First, we consider Monte Carlo $\mce_\ell$, Multilevel Monte Carlo $\mlmce_\ell$ and the SAOB $\saobk{2}_\ell$, $\saobk{3}_\ell$, and $\saob_\ell$ with coupling numbers $q = 2$, $q = 3$ and $q = +\infty$, respectively. We also consider the Multifidelity Monte Carlo estimator $\mfmce_\ell$ in \cite{Peherstorfer_2016}.

We measure the accuracy of $\widehat{\mu}_\ell$, an unbiased estimator of $\mu_\ell=\mathbb{E}[Z_\ell]$, by the MSE
\begin{equation}\label{mse}
\mathbb{E}[(\widehat{\mu}_\ell - \mathbb{E}[Z])^2] = \operatorname{Bias}(Z_\ell)^2 + \operatorname{Var}(\widehat{\mu}_\ell).
\end{equation}
For each level $\ell=1,\dots,L$ we ensure that $\operatorname{Bias}(Z_\ell)^2 = \operatorname{Var}(\widehat{\mu}_\ell)$ resulting in a MSE equal to $2 \cdot \operatorname{Bias}(Z_\ell)^2$. Let $\varepsilon > 0$ denote a given tolerance. To achieve a MSE $\leq \varepsilon^2$ in \eqref{mse} all estimators in this section have computational costs bounded by an expression of the form
\begin{equation}\label{equation_generic_cost}
\text{Cost}(\widehat\mu_\ell)
\leq \phi(\varepsilon) = 
c \varepsilon^{- \gamma^{\operatorname{Cost}} / \gamma^{\operatorname{Bias}}} + c \begin{cases}
\varepsilon^{-2}, \quad &\text{if } \gamma^{\operatorname{Cost}} < \gamma^{\operatorname{Var}}, \\
\varepsilon^{-2} (\log \varepsilon)^2, \quad &\text{if } \gamma^{\operatorname{Cost}} = \gamma^{\operatorname{Var}}, \\
\varepsilon^{- 2 - ({\gamma^{\operatorname{Cost}} -  \gamma^{\operatorname{Var}}})/\gamma^{\operatorname{Bias}}}, \quad &\text{if } \gamma^{\operatorname{Cost}} > \gamma^{\operatorname{Var}}.
\end{cases}
\end{equation}
Note that the cost bound $\phi(\varepsilon)$ in \eqref{equation_generic_cost} consists of two terms.
The first summand arises from ceiling the optimal, but possibly fractional number of samples, and the second summand arises from the constraint MSE $\leq \varepsilon^2$.

In addition to the unbiased estimators for $\mu_\ell$ we also consider $\ree{3}_{v^{\ell, 3}}$, $\saobk{2}_{v^{\ell, 3}}$, $\saobk{3}_{v^{\ell, 3}}$ and $\saob_{v^{\ell, 3}}$, which are unbiased estimators for $(v^{\ell, 3})^T \mu$. 
We record the complexity bounds for the estimators in \Cref{table_elliptic_complexity}. For $\gamma^{\operatorname{Cost}} = 2$ we observe that all estimators have the optimal complexity $\varepsilon^{-2}$ except the MC estimator with complexity $\varepsilon^{-3}$. 
The complexity bounds are verified numerically in \Cref{figure_complexity_elliptic_truth} where we plot the \textit{computed} costs. 
We generated this plot by first computing the optimal fractional number of samples for each estimator and then ceiling this number. 

A more interesting setting is achieved by using the artificial, increased cost per sample
\begin{equation} \label{equation_theoretical_costs}
\operatorname{Cost}(Z_\ell) = 10^{-6} \cdot 2^{6 \ell},
\end{equation}
which corresponds to a rate of $\gamma^{\operatorname{Cost}} = 6$. We list the asymptotic upper bound for the cost $\phi$ of all estimators in \Cref{table_elliptic_complexity}. 
Note that the bounds for MC, MLMC and the RE estimators follows from \Cref{section_upper_cost_bound}.
The bound for MFMC is proved in \cite{Peherstorfer_2018}. 
For the $\operatorname{SAOB}$ we use the procedure outlined at the beginning of \Cref{section_upper_cost_bound}, that is, we consider a sequence of estimators whose complexity is an upper bound for the SAOBs.
In particular, for $\saob_L$ we obtain the rate $\varepsilon^{-3}$ by comparison with $\mlmce_L$, and for $\saob_{v^{L, 3}}$ we obtain the rate $\varepsilon^{-2}$ by comparison with $\ree{3}_{v^{L, 3}}$. \Cref{remark_saob_kappa_complexity} shows that $\saobk{q}_\alpha$ has equal or smaller complexity compared to $\ree{q}_\alpha$ and thus we use the bound for the latter. The special case $\ree{2}_L = \mlmce_L$ shows that $\saobk{q}_L$ has costs not exceeding the costs of MLMC for $q \geq 2$. 
\par
The resulting costs of the estimators are plotted in \Cref{figure_complexity_elliptic_artificial}, thereby confirming the claims made in \Cref{table_elliptic_complexity}. 
Note that the cost for $\mlmce_L$, $\saobk{3}_L$ and $\saob_L$ is of order $\varepsilon^{-3}$ and hence suboptimal.
However, for $\saobk{3}_L$ and $\saob_L$ the suboptimal cost is a result of ceiling the number of samples: Since at least one high fidelity model has to be evaluated, the total cost is lower bounded by $\varepsilon^{-3}$.
Without ceiling, the total cost would in fact be optimal with an order of $\varepsilon^{-2}$.
This is not the case for $\mlmce_L$ where the suboptimal cost is of order $\varepsilon^{-3}$ with or without ceiling the number of samples.


In \Cref{figure_complexity_elliptic_artificial} we also plot $\operatorname{SAOB}$(*), which is the $\operatorname{SAOB}$ without ceiling the number of samples. 
This estimator requires the evaluation of $\approx 5 \cdot 10^{-4}$ samples of the high-fidelity model which is not a natural number and thus impossible in practice. This shows that the complexity is $\varepsilon^{-2}$ if we drop the term $\varepsilon^{- \gamma^{\operatorname{Cost}} / \gamma^{\operatorname{Bias}}} = \varepsilon^{-3}$ associated with ceiling. 
Importantly, we can achieve the optimal complexity by using the bias vector $v^{\ell, 3}$ instead of $v^{\ell, 2} = e_\ell$, which is shown in the right image of \Cref{figure_complexity_elliptic_artificial}. 
This change improves the overall complexity to $\varepsilon^{-2}$, since the rounding cost does not dominate.
Finally, the $\saobk{2}_{v^{\ell, 3}}$ does not couple three models and  thus only achieves a rate of $\varepsilon^{-2.5}$, which is a consequence of the small rate $\gamma^{\operatorname{Var}} = 4$. 
\renewcommand{\arraystretch}{1.2}
\begin{table} 
	\begin{center}
		\begin{tabular}{|l|c | cc|c |c| l |}\hline
			Estimator  &  &  &  & True $\gamma^{\text{Cost}}=2$ & Artificial $\gamma^{\text{Cost}}=6$ &  Justification \\	 
			& $q$ & $\gamma^{\text{Bias}}$ & $\gamma^{\text{Var}}$ & $\phi(\varepsilon)$ & $\phi(\varepsilon)$ &  \\ \hline  
			$\mce_L$   & 1 & 2 & 0 & $\varepsilon^{-1} + \varepsilon^{-3}$ & $\varepsilon^{-3} + \varepsilon^{-5}$ & \Cref{corollary_mc_complexity_rate} \\
			$\mlmce_L$ & 2 & 2 & 4 & $\varepsilon^{-1} + \varepsilon^{-2}$ & $\varepsilon^{-3} + \varepsilon^{-3}$ & \Cref{corollary_mlmc_complexity_rates} \\
			$\mfmce_L$ & $L$ & 2 & 4 & $\varepsilon^{-1} + \varepsilon^{-2}$ & $\varepsilon^{-3} + \varepsilon^{-3}$ & \cite{Peherstorfer_2018} \\ \hline
			$\saobk{2}_L$ & 2 & 2 & 4 & $\varepsilon^{-1} + \varepsilon^{-2}$ & $\varepsilon^{-3} + \varepsilon^{-3}$ & \Cref{corollary_saob_bound_mlmc} \\ 
			$\saobk{3}_L$ & 3 & 2 & 8 & $\varepsilon^{-1} + \varepsilon^{-2}$ & $\varepsilon^{-3} + \varepsilon^{-2}$ & \Cref{corollary_saob_bound_weighted_re} (*) \\
			$\saob_L$ & $L$ & 2 & 8 & $\varepsilon^{-1} + \varepsilon^{-2}$ & $\varepsilon^{-3} + \varepsilon^{-2}$ & \Cref{corollary_saob_bound_weighted_re} (*) \\ \hline
			$\mce_{v^{L, 3}}$ & 1 & 4 & 0 & $\varepsilon^{-0.5} + \varepsilon^{-2.5}$ & $\varepsilon^{-1.5} + \varepsilon^{-3.5}$ &  \Cref{corollary_mc_complexity_rate} \\
			$\ree{2}_{v^{L, 3}}$ & 2 & 4 & 4 & $\varepsilon^{-0.5} + \varepsilon^{-2}$ & $\varepsilon^{-1.5} + \varepsilon^{-2.5}$ & \Cref{corollary_weighted_re_complexity_rates} \\
			$\ree{3}_{v^{L, 3}}$ & 3 & 4 & 8 & $\varepsilon^{-0.5} + \varepsilon^{-2}$ & $\varepsilon^{-1.5} + \varepsilon^{-2}$ & \Cref{corollary_re_complexity_rates} \\ \hline
			$\saobk{2}_{v^{L, 3}}$ & 2 & 4 & 4 & $\varepsilon^{-0.5} + \varepsilon^{-2}$ & $\varepsilon^{-1.5} + \varepsilon^{-2.5}$ & \Cref{corollary_saob_bound_weighted_re} \\ 
			$\saobk{3}_{v^{L, 3}}$ & 3 & 4 & 8 & $\varepsilon^{-0.5} + \varepsilon^{-2}$ & $\varepsilon^{-1.5} + \varepsilon^{-2}$ & \Cref{corollary_saob_bound_re} \\ 
			$\saob_{v^{L, 3}}$ & $L$ & 4 & 8 & $\varepsilon^{-0.5} + \varepsilon^{-2}$ & $\varepsilon^{-1.5} + \varepsilon^{-2}$ & \Cref{corollary_saob_bound_re} \\ \hline
		\end{tabular}
	\end{center}
	\caption{
		Bias, variance, and cost rates together with the resulting complexity bounds in \eqref{equation_generic_cost} to achieve a MSE of order $\varepsilon^2$. 
		The cost bound $\phi$ is given as order of $\varepsilon$, where the left or right term is associated with the corresponding term of \eqref{equation_generic_cost}. 
		The coupling number $q$ denotes the maximal number of models that are evaluated using the same event $\omega$. 
		The justification marked with (*) requires \Cref{corollary_weighted_re_complexity_rates} and \Cref{corollary_saob_bound_weighted_re} to hold with $s > t$, which we did not prove but we conjecture that it is true. 
		Alternatively, we obtain the exact same upper bound for the total complexity, namely $\varepsilon^{-2}$ for $\gamma^{\text{Cost}}=2$ and $\varepsilon^{-3}$ for $\gamma^{\text{Cost}}=6$ by using the bounds $\varepsilon^{-1}+\varepsilon^{-2}$ for $\gamma^{\text{Cost}}=2$ and $\varepsilon^{-3}+\varepsilon^{-3}$ for $\gamma^{\text{Cost}}=6$ which follow from \Cref{corollary_saob_bound_mlmc}.
	} \label{table_elliptic_complexity}
\end{table}

\begin{figure} 
	\begin{center}
	\includegraphics[trim=8 0 30 10, clip, width=0.49 \textwidth]{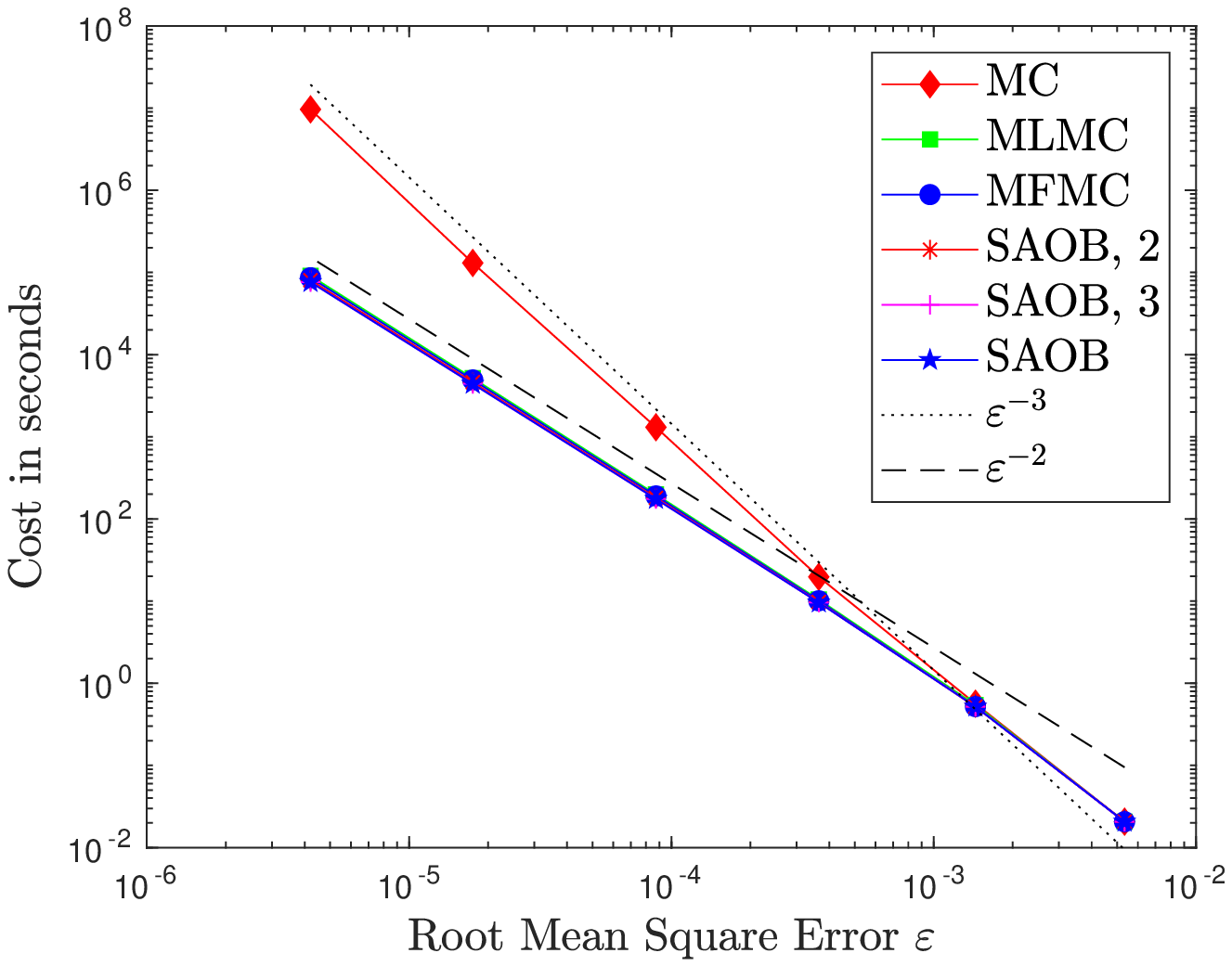}
	\includegraphics[trim=8 0 30 10, clip, width=0.49 \textwidth]{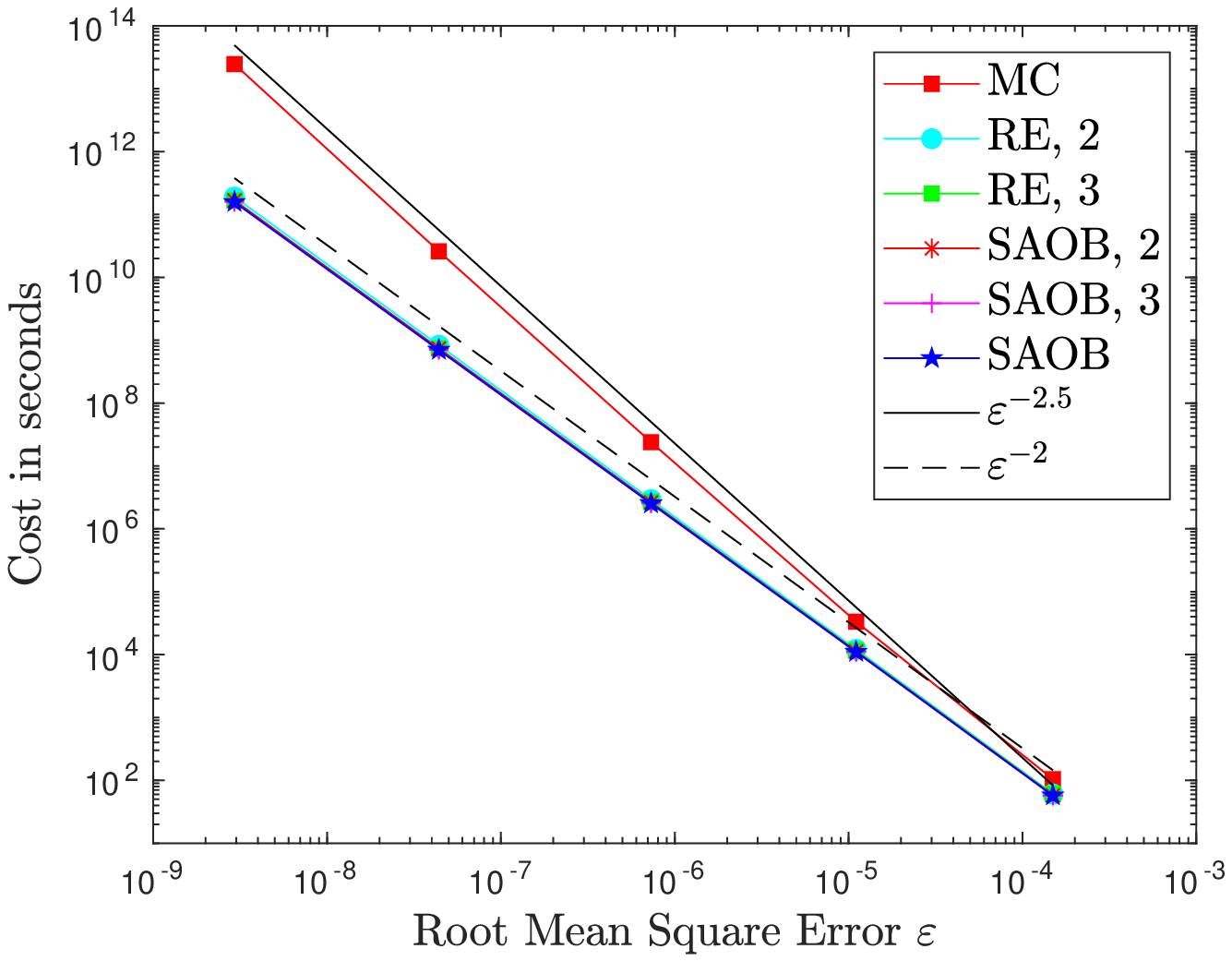}
	\end{center}
	\caption{True cost $\gamma^{\text{Cost}}=2$: unbiased estimators of $\mu_\ell$ on the left and $(v^{\ell, 3})^T \mu$ on the right. Note that the MC estimator on the left is not equal to the MC estimator on the right due to the different bias. The RE, 2 estimator on the right is the weighted RE estimator $\ree{2}_{v^{\ell, 3}}$.}
		 \label{figure_complexity_elliptic_truth}
\end{figure}

\begin{figure} 
	\begin{center}
	\includegraphics[trim=8 0 30 10, clip, width=0.49 \textwidth]{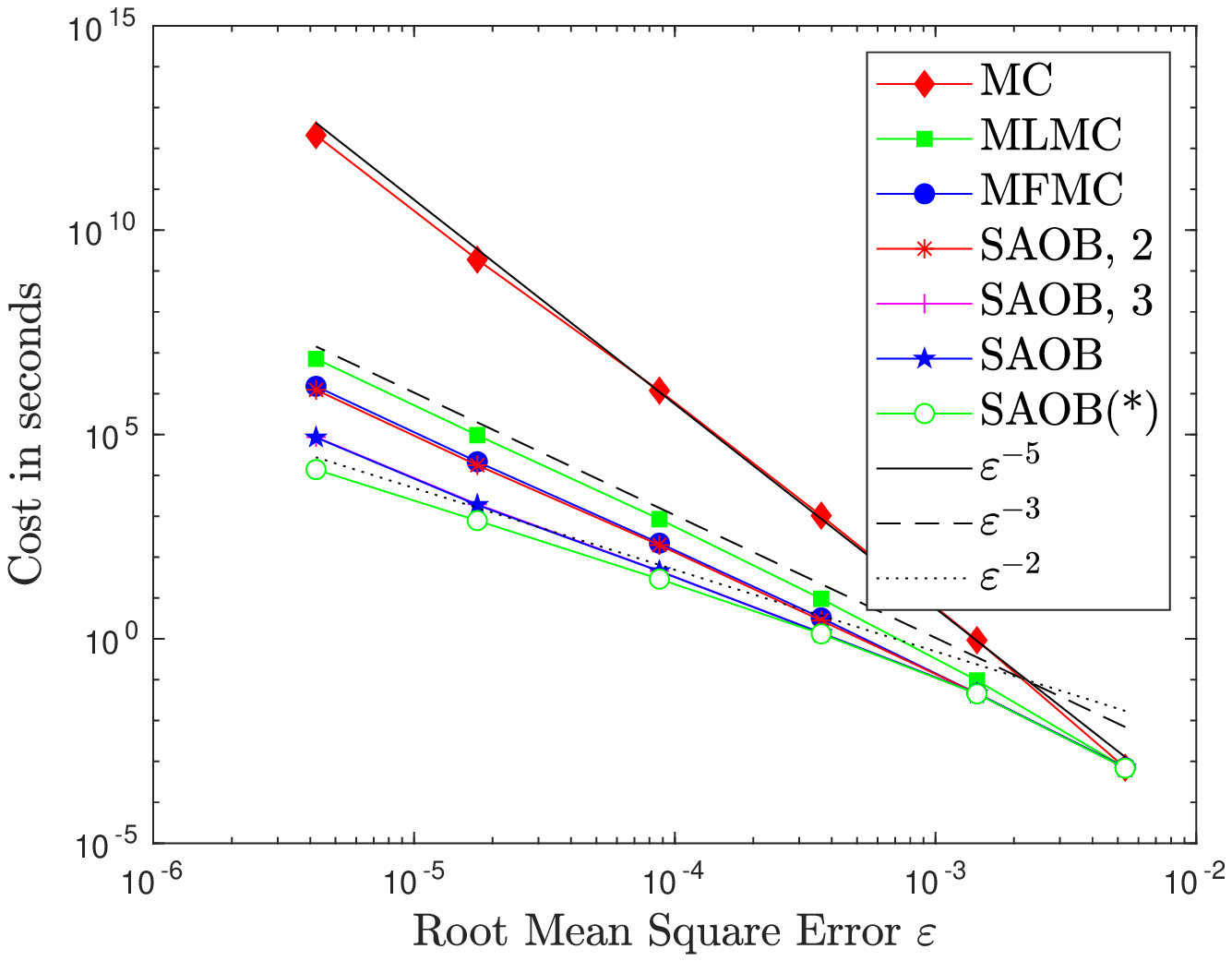}
	\includegraphics[trim=8 0 30 10, clip, width=0.49 \textwidth]{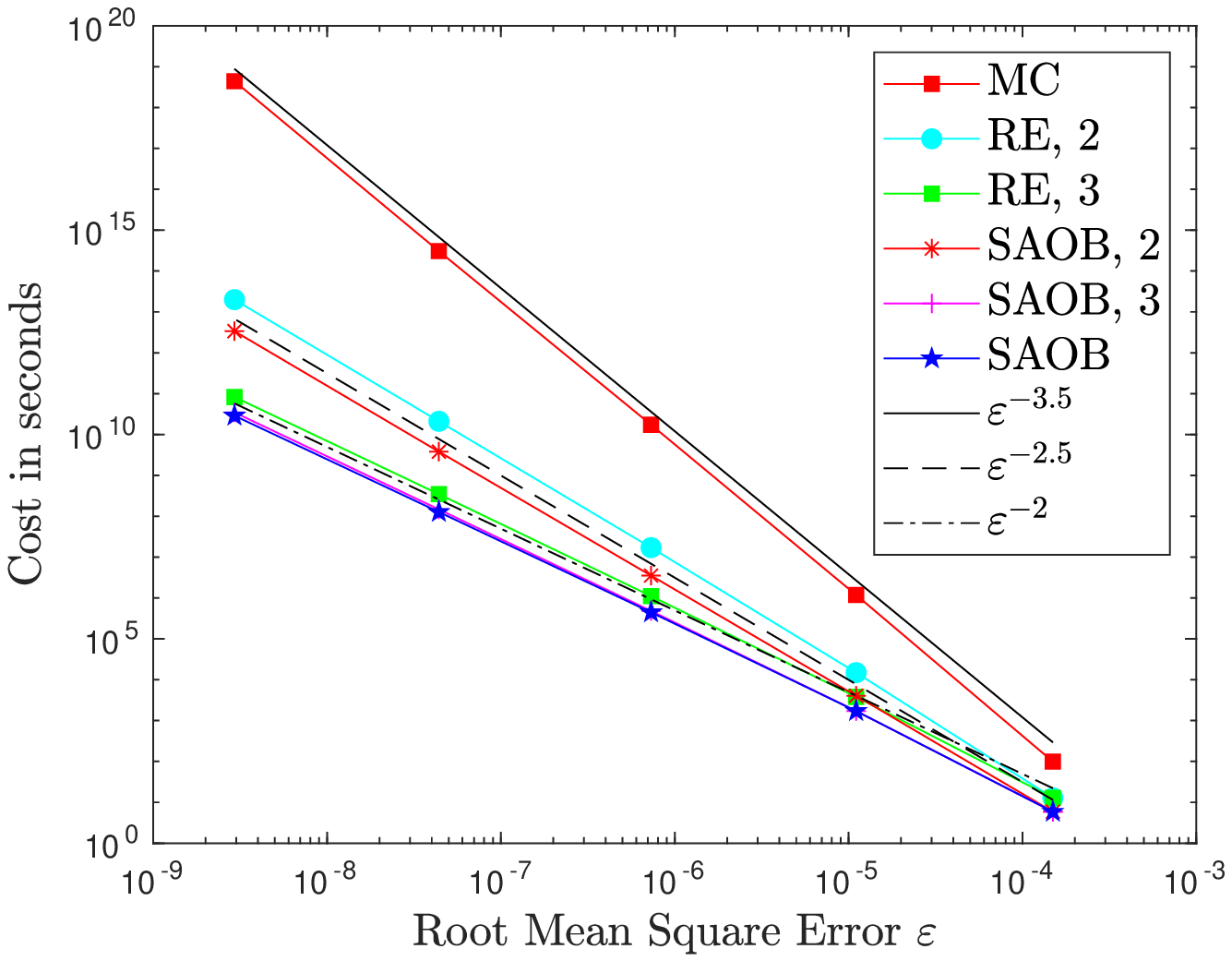}
	\end{center}
	\caption{Artificial cost $\gamma^{\text{Cost}} = 6$: unbiased estimators of $\mu_\ell$ on the left and $(v^{\ell, 3})^T \mu$ on the right. Note that the MC estimator on the left is not equal to the MC estimator on the right due to the different bias. The RE, 2 estimator on the right is the weighted RE estimator $\ree{2}_{v^{\ell, 3}}$. $\operatorname{SAOB}, 3$ and $\operatorname{SAOB}$ have almost identical cost. The costs for $\operatorname{SAOB}$(*) on the left is computed without ceiling the number of samples. }
		 \label{figure_complexity_elliptic_artificial}
\end{figure}

\begin{remark}
Let us describe an informal derivation of the rates $\gamma^{\operatorname{Bias}}$ and $\gamma^{\operatorname{Var}}$ for \eqref{equation_generic_cost} in \Cref{table_elliptic_complexity}. 
For MC, MLMC and the RE estimators the rates follow from \Cref{section_upper_cost_bound} assuming the rates in \eqref{equation_re_numerical_guess_rates} are true. 
For MFMC the authors of \cite{Peherstorfer_2018} show that the rates are the same as for MLMC. 
For the SAOB the rate $\gamma^{\operatorname{Bias}}$ is equal to the bias rate for the RE estimators. 
The rate $\gamma^{\operatorname{Var}}$ follows from the coupling number $q$, i.e. using a single model results in $\gamma^{\operatorname{Var}} = 0$, coupling two models gives $\gamma^{\operatorname{Var}} = 4$ and coupling three or more models gives $\gamma^{\operatorname{Var}} = 8$. 
This reasoning is valid for all examined estimators except for MFMC. 
\end{remark}
Notice that we can only prove upper bounds on the cost of $\saobk{q}_\alpha$.
However, our numerical experiment suggests that these cost bounds are sharp in some cases. 
We examine this finding from a different angle in the next section.
\section{Convergence of SAOB to the RE estimator} \label{section_re_convergence_to_saob}
In this section we conduct a numerical experiment to show that in some cases the coefficients $\beta^k$ for the SAOB converge to the coefficients of RE estimators. Recall that if the aforementioned estimators use the same model groups $S^k$ they can be expressed as 
\begin{equation*}
\widehat{\mu}_L = \sum_{k = 1}^L \sum_{\ell \in S^k} \beta^k_\ell \frac{1}{m_k} \sum_{i = 1}^{m_k} Z_\ell(\omega^k_i),
\end{equation*}
where the coefficients $\beta^k$ depend on the estimator. 
We compare unbiased estimators for $\mathbb{E}[Z_L]$ and thus $e_L = \sum_{k = 1}^L \beta^k$. 
For $\operatorname{SAOB}, q$ and $\operatorname{RE}, q$ with $q = 2, 3, 4$ we define the difference in the coefficients $\beta^k$ as follows
\begin{equation} \label{equation_coefficient_error}
r^{q}(\ell_0) = \left(\sum_{k = 1}^L \|\beta^{k, \operatorname{SAOB, q}}(\ell_0) - \beta^{k, \operatorname{RE}, q} \|^2 \right)^{1 / 2}, 
\end{equation}
where $\ell_0$ is a discretization constant determining the initial accuracy.
We further examine the relative loss of the variance using the RE estimator instead of the optimal $\operatorname{SAOB}$.
To this end we define
\begin{equation} \label{equation_variance_error}
e^{q}(\ell_0) = \frac{\operatorname{Var}(\ree{q}_L(\ell_0)) - \operatorname{Var}(\saobk{q}_L(\ell_0))}{\operatorname{Var}(\saobk{q}_L(\ell_0))}.
\end{equation}

We want to show the convergence of \eqref{equation_coefficient_error} and \eqref{equation_variance_error} to zero for $\ell_0 \rightarrow +\infty$. We achieve this by using an academic toy model with $L = 4$ and $\ell = 1,\dots,4$, where
\begin{equation} \label{equation_numerics_toy_problem}
\begin{aligned}
Z_\ell(\omega) &= Z(\omega) + \sum_{k = 2}^4 c_k(\omega) 2^{- (k - 1) (\ell + \ell_0)} + 0.1 \xi_\ell(\omega) 2^{- 3 (\ell + \ell_0)}, \\
(Z, c_2, c_3, c_4)^T &\sim N(0, Q), \quad Q_{ij} = \exp(- |i - j|), \quad i,j=1,\dots,4, \\
\xi_\ell &\sim N(0, 1).
\end{aligned}
\end{equation}
We assume that $\xi_i$ and $\xi_j$ are independent for $i \not = j$ and that the $\xi_\ell$ are independent of $Z, c_2, c_3, c_4$. 
The parameter $\ell_0$ controls the accuracy of the coarsest model.
This model satisfies \Cref{assumption_re_expansion} with $q = 4$, $\gamma^2 = 1$, $\gamma^3 = 2$ and $\gamma^4 = 3$. 
Furthermore, we fix artificial costs of $\operatorname{Cost}(Z_\ell) = 4^{\ell - 1}$ and remark that the model covariance matrix $C$ can be computed analytically.

Recall that $\operatorname{RE}, 2$ is the MLMC estimator if the bias $\alpha = e_\ell$. 
We plot the computed values of \eqref{equation_coefficient_error} and \eqref{equation_variance_error} for different values of $\ell_0 = 0,\dots, 6$ in \Cref{figure_coefficients_variance_academic}. 
We conclude that both quantities converge to zero showing that for large $\ell_0$ the estimators $\operatorname{SAOB}, q$ and $\operatorname{RE}, q$ are almost identical for the problem in \eqref{equation_numerics_toy_problem}. 
Notice that we allow fractional samples $m_k \in \mathbb{R}_{\geq 0}$ and do not ceil. 
This does not change the results of this section in a fundamental way, since we could also arbitrarily increase the budget or the variance by scaling. 
\begin{figure} 
	\begin{center}
	\includegraphics[trim=8 0 30 10, clip, width=0.49 \textwidth]{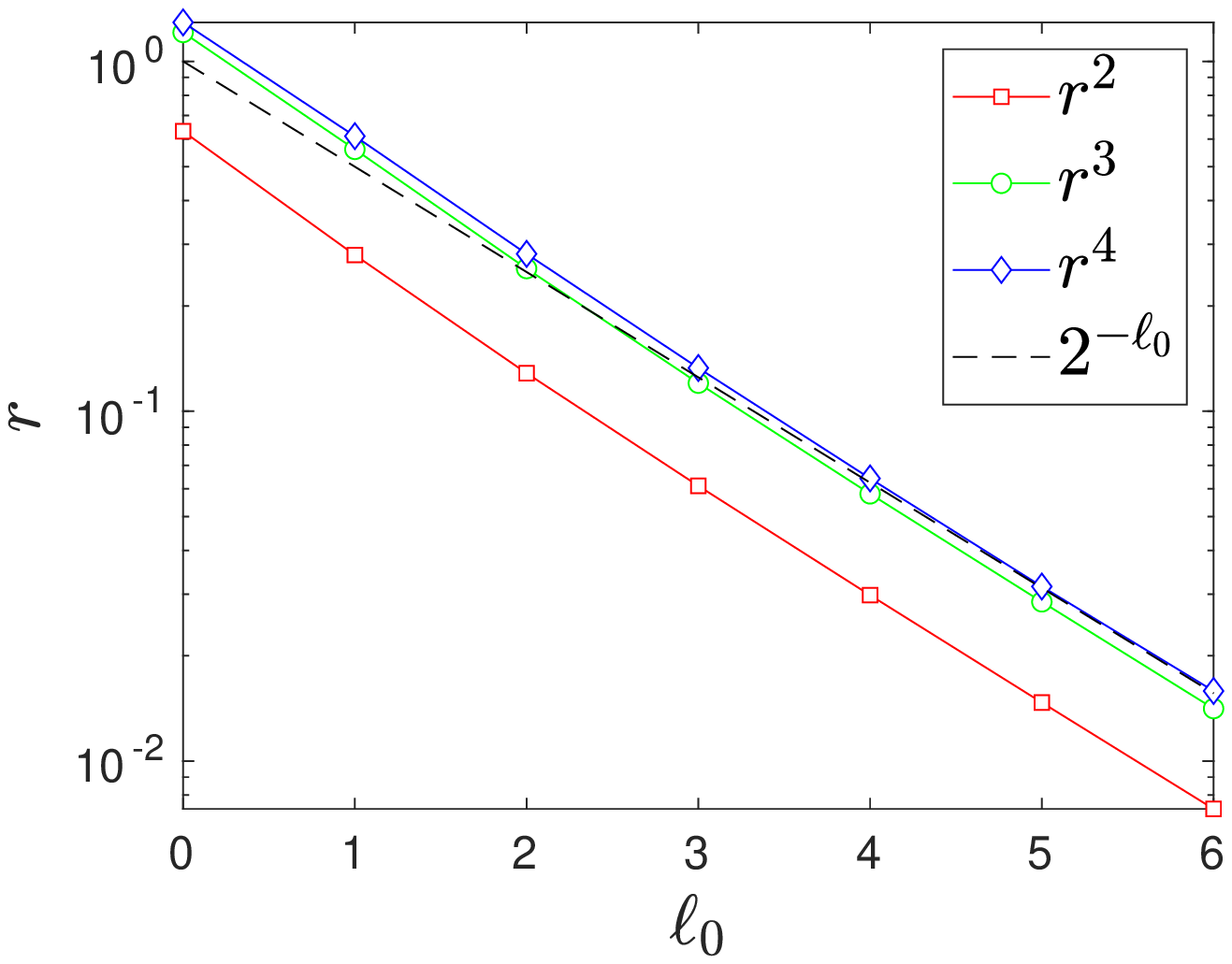}
	\includegraphics[trim=8 0 30 10, clip, width=0.49 \textwidth]{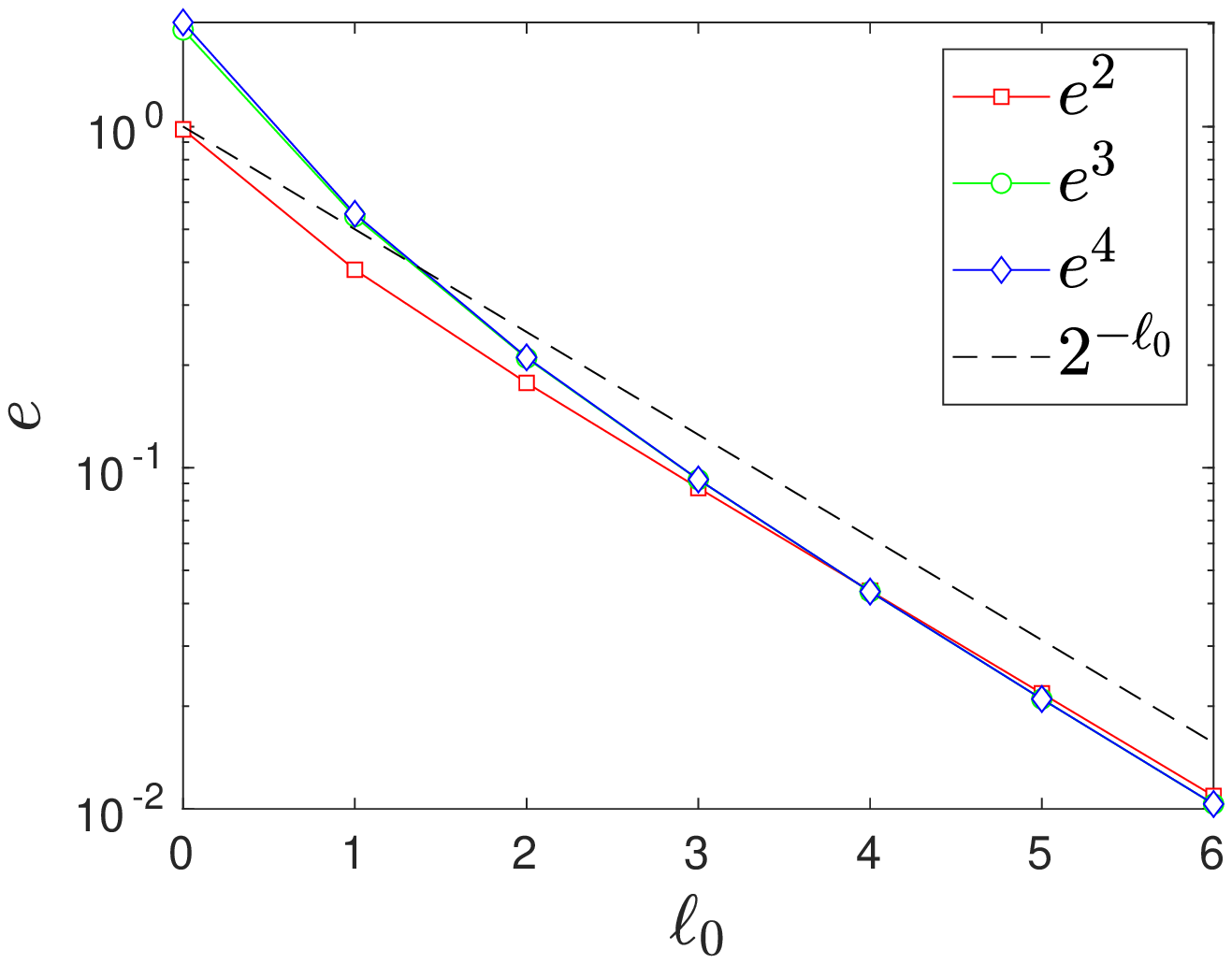}
	\end{center}
	\caption{The left image shows the convergence of the coefficients of the $\operatorname{SAOB}, q$ to the coefficients of the $\operatorname{RE}, q$ estimators w.r.t. $\ell_0$. The right image shows the relative variance to the corresponding $\operatorname{SAOB}, q$ estimator. A reference rate is drawn dashed. Here $r^2$ and $e^2$ is the coefficient and relative variance increment of $\operatorname{RE}, 2$, which is the MLMC estimator.} 
	\label{figure_coefficients_variance_academic}
\end{figure}

\begin{remark}
We informally state an explanation for the convergence of the coefficients of $\operatorname{SAOB}, q$ to $\operatorname{RE}, q$. 
The variance of $\operatorname{RE}, q$ satisfies
\begin{equation*}
\operatorname{Var}(\ree{q}_L) = \sum_{k = 1}^L \frac{\operatorname{Var}\left(\sum_{\ell \in S^k} \beta^k_\ell Z_\ell\right)}{m_k} = \sum_{k = 1}^L \frac{\mathcal{O}(2^{- 2 \gamma^q \ell_0})}{m_k}.
\end{equation*}
To achieve the rate $2^{- 2 \gamma^q \ell_0}$ for every variance term the coefficients $\beta^k$ are often asymptotically uniquely determined. Scaling these coefficients down is not allowed since we have the bias constraint $e_L = \sum_{k = 1}^L \beta^k$. 
The proof of \Cref{lemma_re_bias_variance} shows that the linear combinations $(v^{k, q} - v^{k - 1, q})$ needed to achieve an increased order are uniquely determined with the exception of some degenerate cases, i.e., if the $c_k$ in \Cref{assumption_re_expansion} are zero or linearly dependent. 
\end{remark}
\begin{remark}
The numerical results in this section suggest that the estimators $\operatorname{RE}, q$ converge to $\operatorname{SAOB}, q$ if the coarsest grid is fine enough. We thus suspect that for hierarchical models $\operatorname{RE}, q$ and $\operatorname{SAOB}, q$ often have the same asymptotic cost. We also observed this numerically in \Cref{comp_ell}. $\operatorname{SAOB}$ may couple more models than $\operatorname{RE}, q$, however if the remainder term in \Cref{assumption_re_expansion} cannot be removed by linearly combining more models we expect
\begin{equation*}
(\beta^k)^T C \beta^k \geq c 2^{- 2 k \gamma^q},
\end{equation*}
and thus $\saob_\alpha$ and $\ree{q}_\alpha$ should have the same variance rate of $\gamma^{\operatorname{Var}} = 2 \gamma^q$. 
They also have the same bias rate $\gamma^{\operatorname{Bias}}$. For $\operatorname{SAOB}$ it is reasonable to assume that $S^k = \{1, \dots, k\}$ for $k = 1, \dots, L$, since $\operatorname{SAOB}$ should use all models $Z_1, \dots, Z_L$ if $Z_1$ is fine enough and the asymptotic cost of $S^k$ is given by its finest model, i.e., we may add all coarse models without an asymptotic cost increase. Therefore both estimators have equal bias, variance and cost rates and thus $\saob_\alpha$ has costs asymptotically equal to $\ree{q}_\alpha$ where $q$ is maximal such that \Cref{assumption_re_expansion} is satisfied. 
The reasoning in this remark is however not a formal proof.
\end{remark}
\section{Comparison with ACV estimators} \label{section_acv}
We return to the example in \Cref{sec:numerics} and compare the SAOBs and RE estimators with the ACV estimators \cite{Gorodetsky_2020}, in particular, the ACV-IS, ACV-MF and ACV-KL. 
For brevity we provide only the definition of the ACV-MF estimator,
\begin{equation} \label{equation_acvmf_definition}
\acvmfe_L = \frac{1}{n_L} \sum_{i = 1}^{n_L} Z_L(\omega^i) + \sum_{\ell = 1}^{L - 1} \beta_\ell \left(\frac{1}{n_L} \sum_{i = 1}^{n_L} Z_\ell(\omega^i) - \frac{1}{n_\ell} \sum_{i = 1}^{n_\ell} Z_\ell(\omega^i) \right),
\end{equation} 
and refer to \cite[Def. 2]{Gorodetsky_2020} and \cite[Def. 4]{Gorodetsky_2020} for the definition of the ACV-IS and ACV-KL, respectively.
The comparison is carried out in a separate section since the ACV estimators have been developed very recently, and no asymptotic complexity results are known to date. 
Recall that an important motivation for the introduction of the ACV estimators is their increased variance reduction capacity compared to MLMC and MFMC.
In fact, the ACV estimators in \cite{Gorodetsky_2020} reach the exact same lower variance bound in the infinite low fidelity data limit as the multilevel BLUEs (cf. \cite[Sec. 4]{SchadenUllmann:2020}).
However, the improved variance reduction does not necessarily result in a smaller asymptotic cost for parametric, PDE-based models which we study in this paper. 
We observed this already in the numerical example in \Cref{sec:numerics} for the SAOBs where the asymptotic complexity is improved in some settings, but depends on the bias, variance and cost rate in the considered example.

We plot the cost of the ACV estimators in \Cref{figure_acv_mse} for the artificial cost rate $\gamma^{\operatorname{Cost}} = 6$ without and with ceiling the number of samples. 
\begin{figure} 
	\begin{center}
	\includegraphics[trim=8 0 30 10, clip, width=0.49 \textwidth]{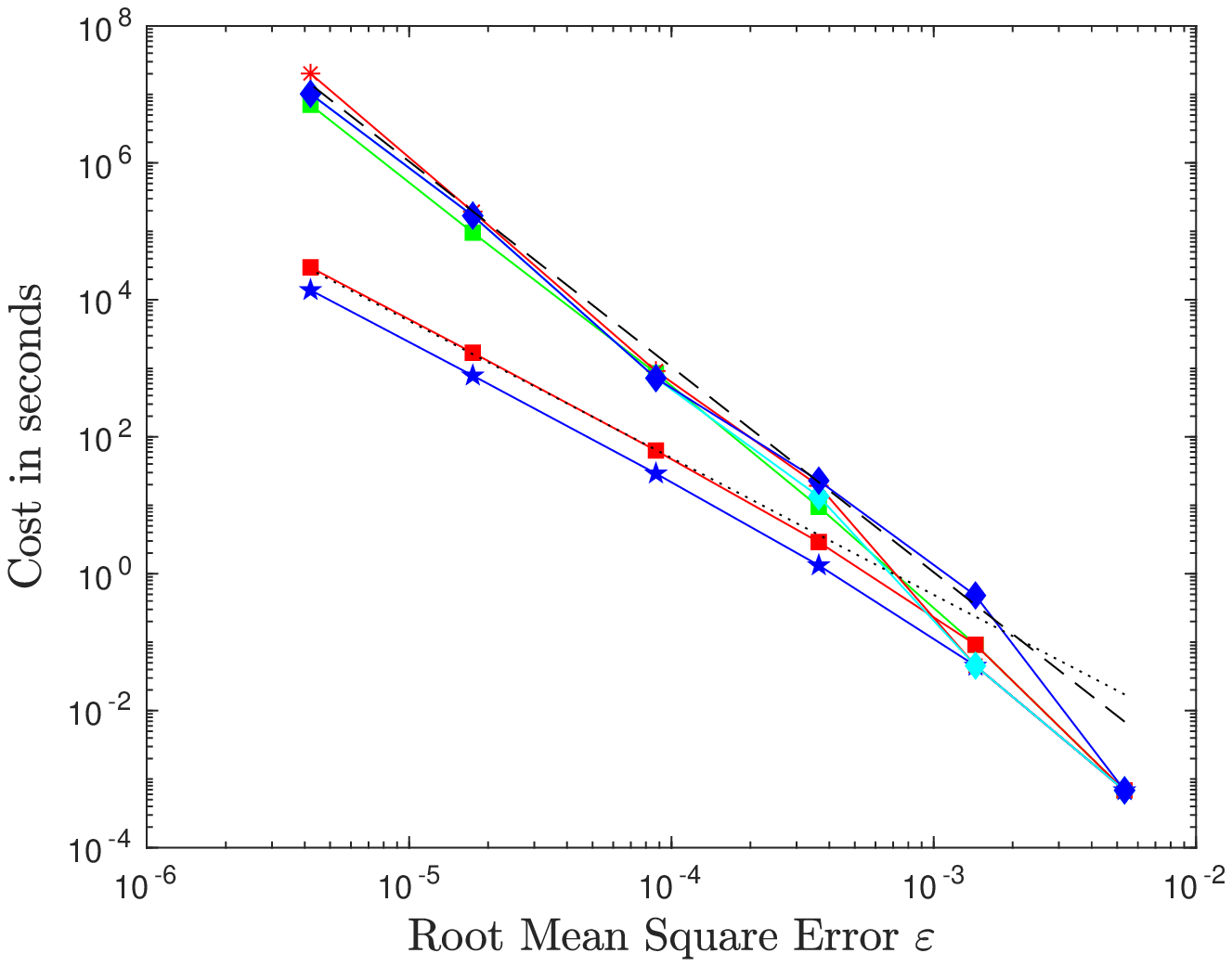}
	\includegraphics[trim=8 0 30 10, clip, width=0.49 \textwidth]{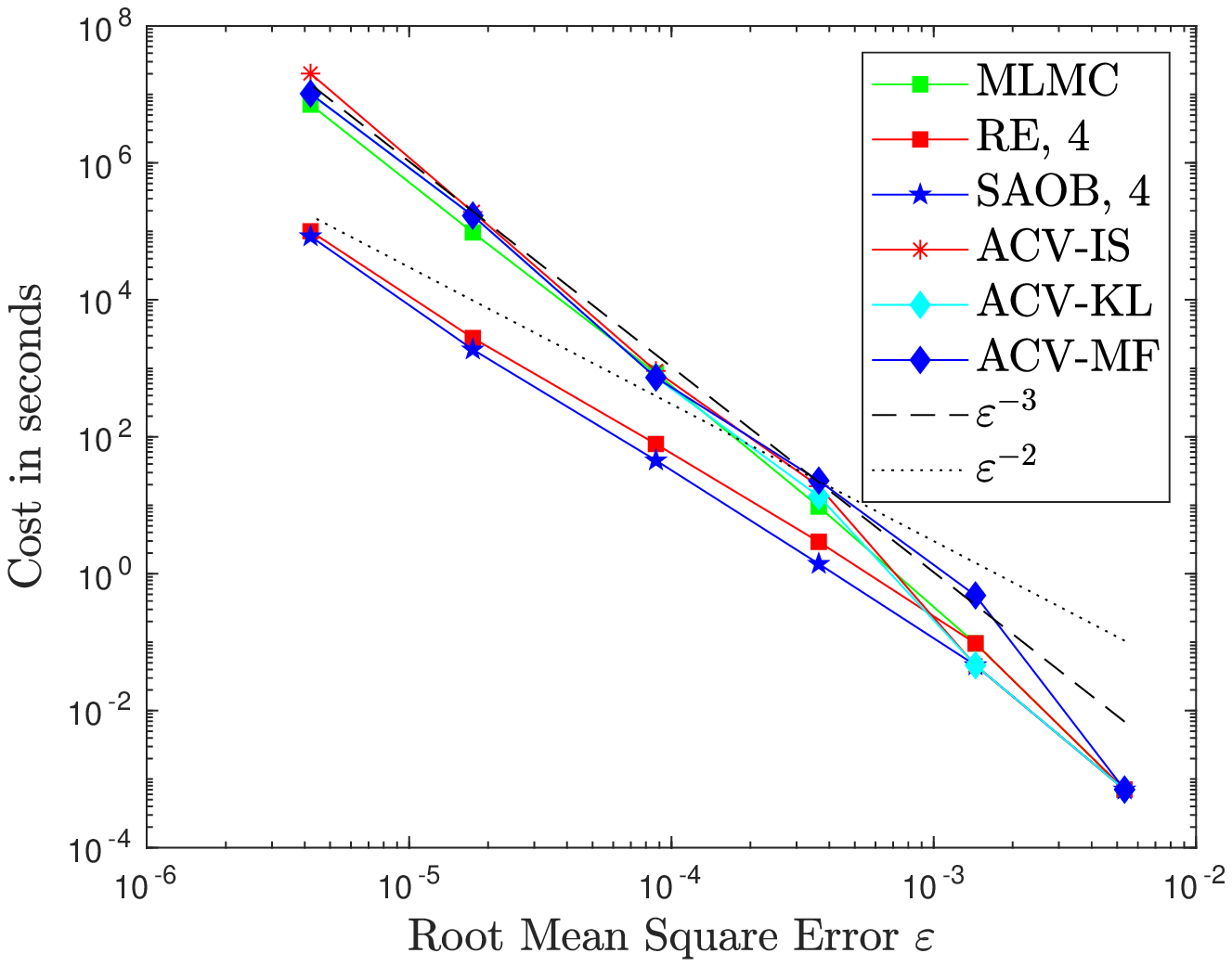}
	\end{center}
	\caption{The left image shows the MSE for the computed costs not ceiling the number of samples and the right image with ceiling for the artificial cost $\gamma^{\operatorname{Cost}} = 6$. For small $\varepsilon$ the cost for the ACV-MF and ACV-KL estimator are identical. Both plots display the estimators with a bias of $\mu_\ell$.} 
	\label{figure_acv_mse}
\end{figure}
We can clearly see that in this example the ACV estimators have the asymptotic cost of order $\varepsilon^{-3}$.
This is the same cost as the other estimators, and in particular, the MLMC estimator.
The results in the right plot of \Cref{figure_acv_mse} should be compared with the left plot in \Cref{figure_complexity_elliptic_artificial}.

We now offer a possible explanation for this observation by looking at the coefficients $\beta^k$ of the various estimators.
Since the ACV estimators are also linear unbiased estimators for $\mu_L$ we compare them to the SAOB, 4 and the weighted RE, 4 estimator. Each of these three estimators can be written as 
\begin{equation*}
\widehat{\mu}_L = \sum_{k = 1}^6 \sum_{\ell \in S^k} \beta^k_\ell \frac{1}{m_k} \sum_{i = 1}^{m_k} Z_\ell(\omega^i_k),
\end{equation*}
with different model groups $S^1, \dots, S^6$ for every estimator. We plot the resulting coefficients $\beta^k$ in \Cref{figure_acv_coefficients}.
\begin{figure} 
	\begin{center}
	\includegraphics[trim=60 10 70 0, clip, width=0.325 \textwidth]{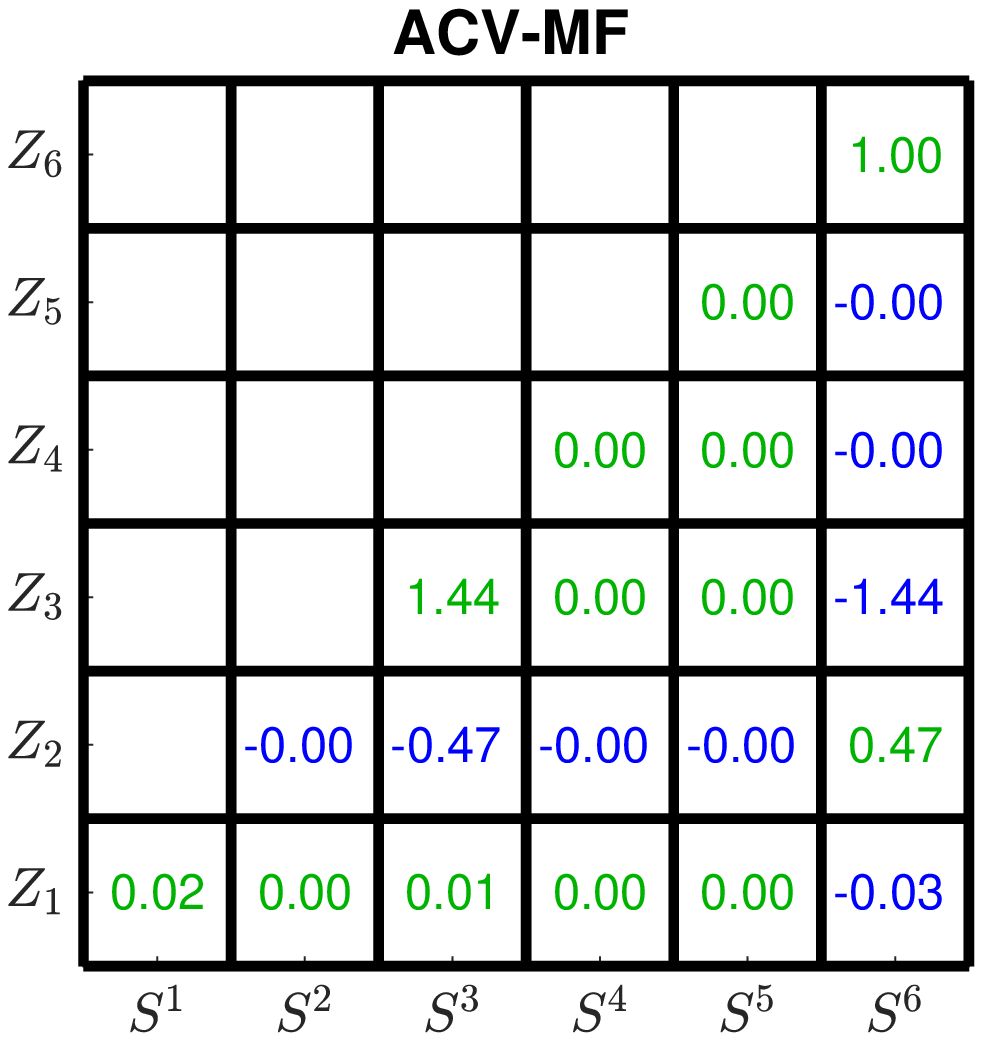}
	\includegraphics[trim=60 10 70 0, clip, width=0.325 \textwidth]{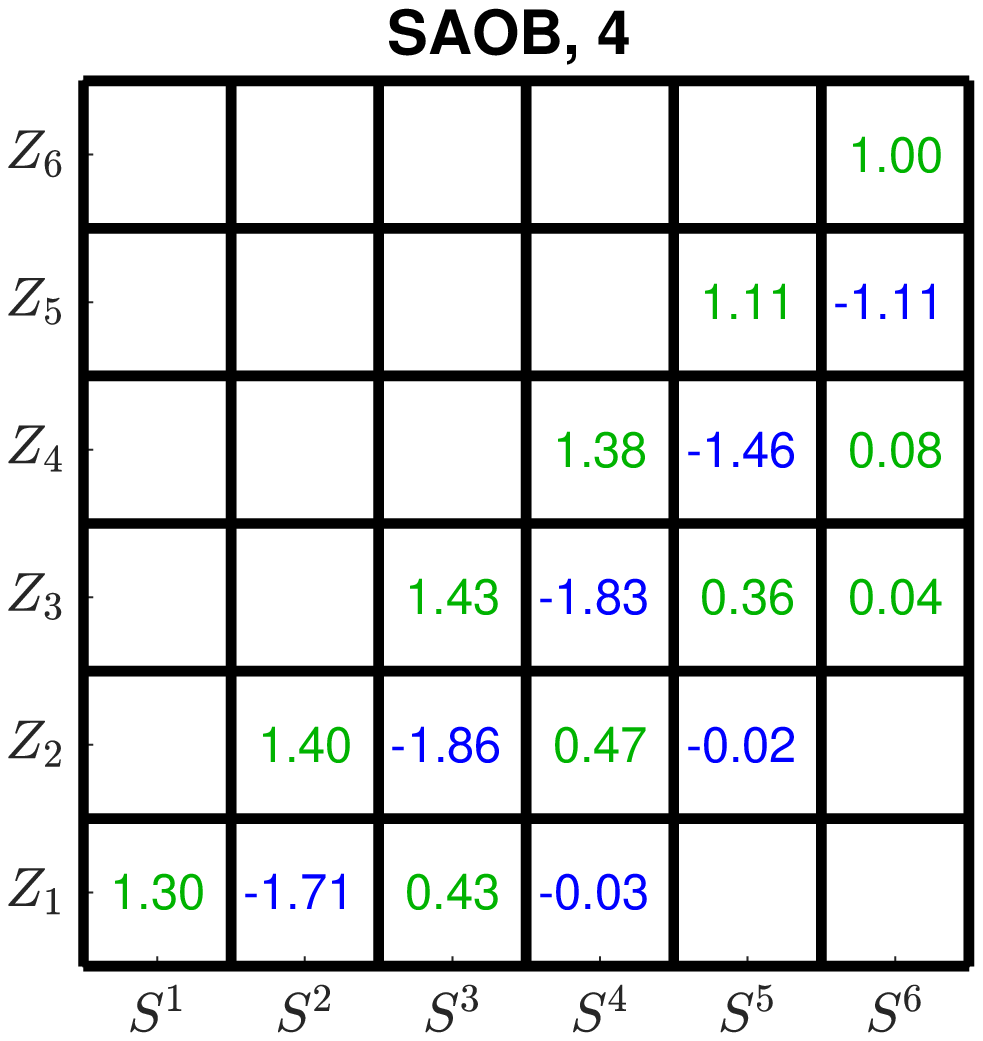}
	\includegraphics[trim=60 10 70 0, clip, width=0.325 \textwidth]{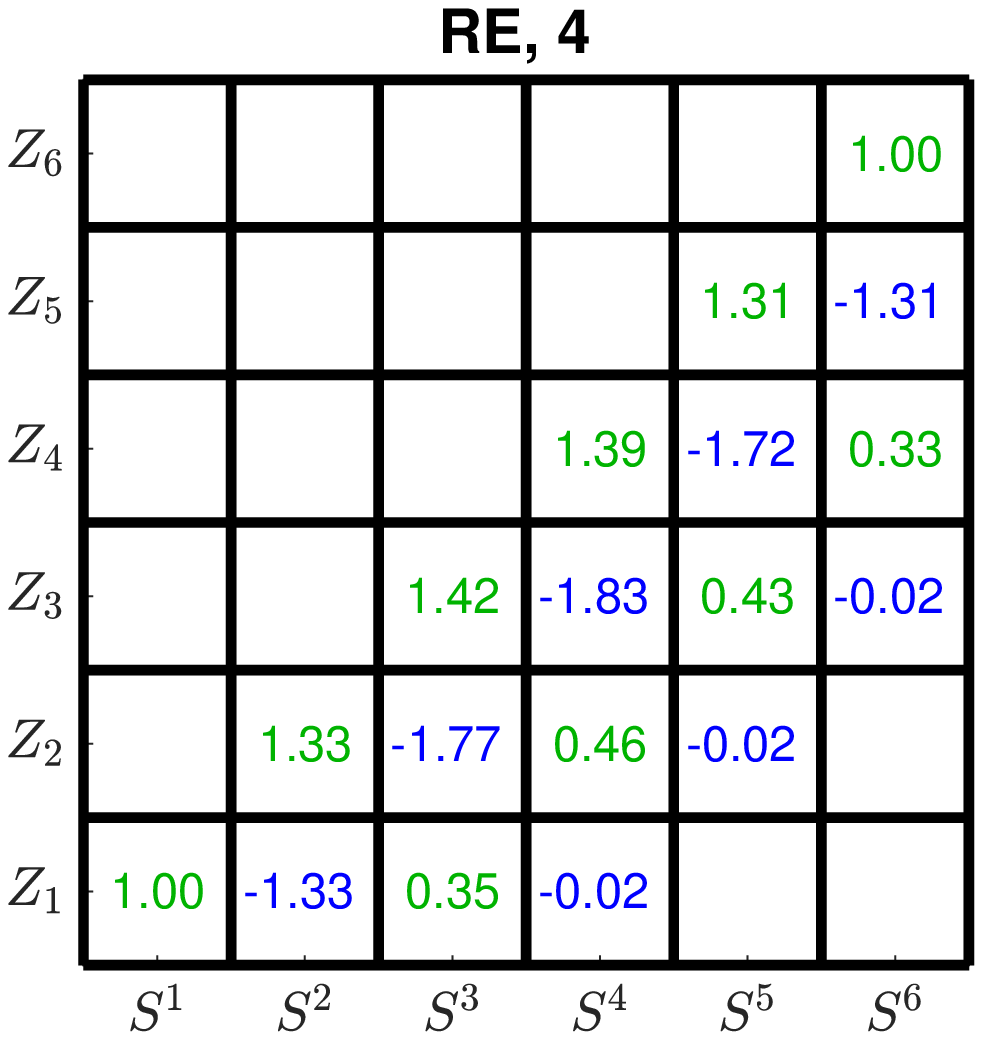}
	\end{center}
	\caption{Coefficients in front of every model group for some estimators in \Cref{figure_acv_mse} where the smallest value of $\varepsilon$ has been used. A column represents the models in the model group. For example, $S^5 = \{2,\dots,5\}$ for RE, 4 and SAOB, 4 and $S^5 = \{1, \dots, 6\}$ for ACV-MF. The coefficient $\beta^4_3 = -1.83$ for both SAOB, 4 and RE, 4. Blank entries are not contained in the respective model group. Entries with values $0.00$ or $-0.00$ are in the respective model group but have a small coefficient. The bias constraint ensures that the column vectors add to $e_L$ (up to small rounding errors due to the display of only two decimal places).} 
	\label{figure_acv_coefficients}
\end{figure}
Using the definition of the ACV-MF estimator in \eqref{equation_acvmf_definition} and using $n_\ell \geq n_L$ it is straightforward to verify that the ACV-MF coefficients satisfy the sign pattern
\begin{equation} \label{equation_acv_sign_equality}
\operatorname{sign}(\beta^L_\ell) = -\operatorname{sign}(\beta^k_\ell) \quad \text{ for all } k \in \{1, \dots, L - 1\} \text{ with } \ell \in S^k.
\end{equation}
However, the RE estimator exhibits a chequerboard pattern w.r.t. the sign of the coefficients, which cannot be satisfied under the conditions \eqref{equation_acv_sign_equality}. 
This reduces the variance reduction capability of the ACV-MF estimator, since some linear combinations of models are prohibited by construction. 
Thus, the estimator variance
\begin{equation*}
\operatorname{Var}\left(\sum_{\ell \in S^k} \beta^k_\ell Z_\ell \right) = (\beta^k)^T C \beta^k
\end{equation*} 
may be larger than necessary. If the linear combinations that asymptotically decrease the estimator variance require multiple sign changes, which seems to be the case according to \Cref{section_re_convergence_to_saob} and \Cref{figure_acv_coefficients}, then the smaller complexity of the RE estimators cannot be achieved by the ACV-MF estimator. Since for the ACV-MF we can have a single sign change similar to the sign pattern in MLMC, we believe that the asymptotic complexity of the ACV-MF estimator is in general equal to the complexity of MLMC for the parametric PDE-based models in this paper. 
We remark that the sign pattern \eqref{equation_acv_sign_equality} holds also true for the ACV-IS and ACV-KL estimator.
Hence, a performance similar to the ACV-MF can be expected.
\section{Conclusions}\label{sec:conclusions}
In this paper we study the asymptotic properties of the multilevel BLUE for the expectation of scalar-valued, PDE-based random outputs. 
The main tool of our analysis is a pathwise expansion of the random output in terms of a discretization parameter, linking a collection of models associated with the output.
We use the idea of Richardson extrapolation (RE) combined with MLMC theory to analyse the complexity of a certain RE estimator.
This allows us to bound the asymptotic complexity of the SAOB which is not worse than the complexity of the RE estimators and in particular MLMC.
Numerical experiments with a smooth PDE-based output in 2D space suggest that a RE type of expansion for the quantity of interest holds both in terms of the bias and variance. 
If we use the true sample cost, then the SAOB, MLMC and MFMC estimators give the optimal complexity with respect to the mean-square error.
For an artificial, increased cost, the SAOB and RE estimators have a smaller complexity compared to MLMC and MFMC. 
However, it remains an open research question whether the SAOB has an asymptotically smaller complexity than the RE estimator.  
\section*{Acknowledgements}
The authors thank Michael Ulbrich for the finite element code that was used to solve the PDE and to sample from the mean zero Gaussian random field with Matern 3/2 covariance in \Cref{sec:numerics}. 
\appendix
\section{Proof of Lemma~\ref{lemma_opt_sample_allocation}}
\label{appendix_proof_lemma_opt_sample_allocation}
%
%
First, we remark that \eqref{equation_saob_opt_samples} is valid if $\beta^k = 0$, since then by our convention $m^*_k = 0$. On the other hand, if $\beta^k \not = 0$ and since $C$ is positive definite, we have
\begin{equation*}
\lim_{m_k \rightarrow 0} J(m) = +\infty.
\end{equation*}
Furthermore, since \eqref{equation_opt_sample_allocation} clearly has a feasible point, the number of samples is lower bounded,
\begin{equation*}
m^*_k > 0 \quad \text{for all } k \in \{1,..., K\} \text{ with } \beta^k \not = 0.
\end{equation*}
Similarly, since $W_k > 0$, the cost constraint ensures the upper bound
\begin{equation*}
m^*_k < c \quad \text{for all } k \in \{1,..., K\}.
\end{equation*}
Hence \eqref{equation_opt_sample_allocation} is an optimization problem over a compact set and since $J$ is convex and thus continuous, a minimizer $m^*$ exists. The function $J$ is monotonically decreasing in $m$ and thus the cost constraint is satisfied with equality at a minimizer 
\begin{equation*}
\sum_{k = 1}^K m^*_k W_k = p,
\end{equation*}
which allows us to replace the inequality constraint with an equality constraint. 
The Karush--Kuhn--Tucker conditions \cite[Section 5.5]{boyd_2009} are necessary and sufficient for the convex optimization problem \eqref{equation_opt_sample_allocation}. 
Thus, with Lagrange-Multipliers $\lambda \in \mathbb{R}$ and $\xi_1, \dots, \xi_K \in \mathbb{R}$,
\begin{align}
\label{equation_KKT_1}  - \frac{(\beta^k)^T C \beta^k}{(m^*_k)^2} + \lambda W_k - \xi_k  &= 0 \qquad \text{for all } k = \{1, \dots, K\} \text{ with } \beta^k \not = 0, \\
\label{equation_KKT_2} \sum_{k = 1}^K m^*_k W_k &= p, \\
m^*_k \geq 0, \quad \xi_k &\geq 0, \quad \xi_k m^*_k = 0 \qquad \text{for all } k = \{1, \dots, K\} \text{ with } \beta^k \not = 0
\end{align} 
and $m^*_k = 0$ if $\beta^k = 0$. For $\beta^k \not = 0$ we have already shown $m^*_k > 0$, thus $\xi_k = 0$ and \eqref{equation_KKT_1} reads
\begin{equation} \label{equation_mstar_temp}
m^*_k = \left(\frac{(\beta^k)^T C \beta^k}{\lambda W_k} \right)^{1 / 2}.
\end{equation}
This expression is well defined for $\lambda > 0$, which is valid since otherwise \eqref{equation_KKT_1} cannot be satisfied at the minimizer $m^*$. 
We insert \eqref{equation_mstar_temp} into the cost constraint \eqref{equation_KKT_2} and arrive at
\begin{equation} \label{equation_lambdastar_temp}
\frac{1}{\lambda^{1 / 2}} = \frac{p}{\sum_{k = 1}^K ((\beta^k)^T C \beta^k W_k )^{1 / 2}}.
\end{equation}
Here the denominator is positive since $W_k > 0$, $C$ is positive definite and $\beta^k \not = 0$ for some index $k$. 
The last statement follows from the bias constraint $\alpha = \sum_{k = 1}^K \beta^k$ and $\alpha \not = 0$ from the assumptions of this lemma. 
We now insert \eqref{equation_lambdastar_temp} into \eqref{equation_mstar_temp} to obtain the result \eqref{equation_saob_opt_samples}, which also shows the uniqueness of a minimizer. 
Inserting this $m^*$ into $J$ then shows \eqref{equation_saob_opt_variance}.
%
%
\section{Proof of Lemma~\ref{lemma_re_bias_variance}}
\label{appendix_proof_lemma_re_bias_variance}
%
%
We first show that the following statement is true for all $k = 1,\dots, L$ and $\ell_0 \in \{0, \dots, L - \ell\}$:
\begin{equation} \label{equation_re_small_expansion}
\sum_{\ell = 1}^{L - \ell_0} v^{k, q}_\ell Z_{\ell + \ell_0} = Z + \sum_{j = k + 1}^{q - 1} c^k_j 2^{- (k + \ell_0) \gamma^j} + \mathcal{O}(2^{- (k + \ell_0) \gamma^q}),
\end{equation}
where $c^k_j$ are random variables with bounded second moment. 
The remainder term $\mathcal{O}(\cdot)$ also has a bounded second moment. 
The statement for $k = 1$ is \Cref{assumption_re_expansion} (i). 
Now let $1 < k < q$ and assume the induction hypothesis is true for $k - 1$. 
Observe that $v^{k - 1}_{L - \ell_0} = 0$ if $\ell_0 \in \{0, \dots, L - k\}$ due to the recursion \eqref{equation_re_vectors}.
Defining $v^{k - 1, q}_0 = 0$ we have
\begin{align*}
\sum_{\ell = 1}^{L - \ell_0} v^{k, q}_\ell Z_{\ell + \ell_0} &= \frac{1}{2^{\gamma^k} - 1} \sum_{\ell = 1}^{L - \ell_0} (2^{\gamma^k} v^{k - 1, q}_{\ell - 1} - v^{k - 1, q}_\ell) Z_{\ell + \ell_0} \\
&= \frac{1}{2^{\gamma^k} - 1} \left(2^{\gamma^k} \sum_{\ell = 1}^{L - \ell_0} v^{k - 1, q}_\ell Z_{\ell + \ell_0 + 1} - \sum_{\ell = 1}^{L - \ell_0} v^{k - 1, q}_\ell Z_{\ell + \ell_0} \right).
\end{align*}
Now we apply the induction hypothesis, which leads to
\begin{align*}
\sum_{\ell = 1}^{L - \ell_0} v^{k, q}_\ell Z_{\ell + \ell_0} &= Z + \frac{2^{\gamma^k}}{2^{\gamma^k} - 1} \left(\sum_{j = k}^{q - 1} c^{k - 1}_j 2^{- (k + \ell_0) \gamma^j} + \mathcal{O}(2^{- (k + \ell_0) \gamma^q}) \right) \\
&- \frac{1}{2^{\gamma^k} - 1} \left(\sum_{j = k}^{q - 1} c^{k - 1}_j 2^{- (k + \ell_0 - 1) \gamma^j} + \mathcal{O}(2^{- (k + \ell_0 - 1) \gamma^q}) \right).
\end{align*}
The random coefficients thus satisfy
\begin{equation*}
c^k_j = \frac{1}{2^{\gamma^k} - 1} \left(2^{\gamma^k} c^{k - 1}_j - 2^{\gamma^j} c^{k - 1}_j \right),
\end{equation*}
where a similar expression for the remainder term is valid. 
Notice that $c^k_k = 0$ and that the $c^k_j$ have bounded second moment as well as the remainder term. 
Therefore \eqref{equation_re_small_expansion} holds for $1 \leq k < q$. 
For $k \geq q$ observe $v^{k - 1, q}_{L - \ell_0} = 0$ and thus
\begin{equation*}
\sum_{\ell = 1}^{L - \ell_0} v^{k, q}_\ell Z_{\ell + \ell_0} = \sum_{\ell = 1}^{L - \ell_0} v^{k - 1, q}_{\ell - 1} Z_{\ell + \ell_0} = \sum_{\ell = 1}^{L - \ell_0 - 1} v^{k - 1, q}_\ell Z_{\ell + \ell_0 + 1} = \sum_{\ell = 1}^{L - \ell_0} v^{k - 1, q}_\ell Z_{\ell + \ell_0 + 1},
\end{equation*}
which by repeatedly applying this process allows us to reduce the case of $k \geq q$ to $k = q - 1$ by increasing the value of $\ell_0$. This shows \eqref{equation_re_small_expansion}.

Let us now prove the bias estimate \eqref{equation_M1_re} using \eqref{equation_re_small_expansion}. For $\ell_0 = 0$ we conclude
\begin{equation*}
|(v^{k, q})^T \mu - \mathbb{E}[Z]| = \left| \sum_{j = k + 1}^{q - 1} \mathbb{E}[c^k_j] 2^{- k \gamma^j} + \mathcal{O}(2^{- k \gamma^q}) \right| \leq c \begin{cases} 
2^{- k \gamma^{k + 1}}, &\text{if } k < q, \\
2^{- k \gamma^q}, &\text{otherwise}.
\end{cases}
\end{equation*}
The bound \eqref{equation_M1_re} for the case $k < q$ is obtained using the crude estimate
\begin{equation*}
2^{- k \gamma^{k + 1}} = 2^{- k \gamma^{k + 1}} 2^{k \gamma^q} 2^{- k \gamma^q} \leq 1 \cdot 2^{q \gamma^q} 2^{- k \gamma^q} \leq c 2^{- k \gamma^q},
\end{equation*}
where now $c$ is independent of $k$. The variance estimate \eqref{equation_M2_re} can be derived similarly from \eqref{equation_re_small_expansion}. Here the key idea is that the difference $v^{k, q} - v^{k - 1, q}$ is used to remove $Z$ from \eqref{equation_re_small_expansion}, which we use with $\ell_0 = 0$, $k$ and $k - 1$,
\begin{align*}
\sum_{\ell = 1}^L (v^{k, q}_\ell - v^{k - 1, q}_\ell) Z_\ell &= \sum_{j = k + 1}^{q - 1} c^k_j 2^{- k \gamma^j} + \mathcal{O}(2^{- k \gamma^q}) - \sum_{j = k}^{q - 1} c^{k - 1}_j 2^{- (k - 1) \gamma^j} + \mathcal{O}(2^{- (k - 1) \gamma^q}) \\
&= \sum_{j = k}^{q - 1} \widetilde{c}^{k - 1}_j 2^{- (k - 1) \gamma^j} + \mathcal{O}(2^{- (k - 1) \gamma^q})
\end{align*}
for suitably defined random variables $\widetilde{c}^{k - 1}_j$ with bounded second moment. Thus taking the variance yields the desired result. Finally, the estimate \eqref{equation_M3_re} follows from the definition of the model groups in \eqref{equation_re_model_groups} and the geometric cost of $Z_\ell$ \eqref{equation_re_geometric_cost} in \Cref{assumption_re_expansion} (ii),
\begin{align*}
W_k &= \operatorname{Cost}\left(S^k \right) = \operatorname{Cost}\left(Z_{\max\{k - q + 1, 1\}}, Z_{\max\{k - q + 2, 1\}}, \dots, Z_{k} \right) = \sum_{\ell = \max \{k - q + 1, 1\}}^k \operatorname{Cost}(Z_\ell) \\
&\leq c \sum_{\ell = 1}^k 2^{\ell \gamma^{\operatorname{Cost}}} \leq c 2^{k \gamma^{\operatorname{Cost}}}.
\end{align*}
%
%
\section{Proof of Lemma~\ref{lemma_weighted_re_bias_variance}}
\label{appendix_proof_lemma_weighted_re_bias_variance}
%
%
We only have to prove the boundedness of $a_k$. For $t = s$ we have $a_k = 1$ and the standard RE estimator. For $s < t$ observe from \eqref{equation_re_vectors} that there exist coefficients $d$ such that 
\begin{equation*}
\sum_{k = 1}^t d_k v^{k, s} = v^{t, t}.
\end{equation*} 
We use the property of the shift matrix $D$ to obtain  
\begin{equation*}
\sum_{k = L - t + 1}^{L} d_{k - L + t} v^{k, s} = \sum_{k = 1}^t d_k v^{L - t + k, s} = D^{L - t} v^{t, t} = v^{L, t}.
\end{equation*}
We combine this with the basis property of the differences $v^{k, s} - v^{k - 1, s}$ in \eqref{equation_re_basis}, we define $a_{L + 1} = 0$ and use $v^{0, s} = 0$ to rewrite the sum in terms of differences of the $a_k$,
\begin{equation*}
v^{L, t} = \sum_{k = 1}^L a_k (v^{k, s} - v^{k - 1, s}) = \sum_{k = 1}^L a_k (v^{k, s} - v^{k - 1, s}) = \sum_{k = 1}^L (a_k - a_{k + 1}) v^{k, s}.
\end{equation*}
We summarize the chain of equations 
\begin{equation*}
\sum_{k = L - t + 1}^{L} d_{k - L + t} v^{k, s} = \sum_{k = 1}^L (a_k - a_{k + 1}) v^{k, s}
\end{equation*}
and since the $v^{k, s}$ are linearly independent the last $t$ coefficients $a_{L - t + 1}, \dots, a_L$ only depend on $d$ and are independent of $L$. The remaining coefficients satisfy $a_k = a_{k + 1}$ for $\ell = 1, \dots, L - t$ and thus $|a_k| \leq c$ with a constant $c$ independent of $k$ and the finest level $L$.
%
%
\bibliographystyle{siamplain}
\bibliography{literature}
\end{document}